\title{Division by Zero in Non-involutive Meadows}
\author{J.A. Bergstra \and C.A. Middelburg}
\institute{Informatics Institute, Faculty of Science, University of
           Amsterdam, \\
           Science Park~904, 1098~XH Amsterdam, the Netherlands \\
           \email{J.A.Bergstra@uva.nl,C.A.Middelburg@uva.nl}}
\begin{document}
\maketitle

\begin{abstract}
Meadows have been proposed as alternatives for fields with a purely 
equational axiomatization.
At the basis of meadows lies the decision to make the multiplicative 
inverse operation total by imposing that the multiplicative inverse of 
zero is zero.
Thus, the multiplicative inverse operation of a meadow is an involution.
In this paper, we study `non-involutive meadows', i.e.\ variants of
meadows in which the multiplicative inverse of zero is not zero,
and pay special attention to non-involutive meadows in which the 
multiplicative inverse of zero is one.
\begin{keywords} 
non-involutive meadow, one-based non-involutive meadow, 
one-totalized field, one-totalized field of rational numbers, 
equational specification, initial algebra specification
\end{keywords}%
\begin{classcode}
12E12, 12L12, 68Q65
\end{classcode}
\end{abstract}

\section{Introduction}
\label{sect-intro}

The primary mathematical structure for measurement and computation is
unquestionably a field.
However, fields do not have a purely equational axiomatization, not all
fields are total algebras, and the class of all total algebras that 
satisfy the axioms of a field is not a variety.
This means that the theory of abstract data types cannot use the axioms 
of a field in applications to number systems based on rational, real or
complex numbers.

In~\cite{BT07a}, meadows are proposed as alternatives for fields with a
purely equational axiomatization.
A meadow is a commutative ring with a multiplicative identity element
and a total multiplicative inverse operation satisfying two equations
which imply that the multiplicative inverse of zero is zero.
Thus, all meadows are total algebras and the class of all meadows is a 
variety.
At the basis of meadows lies the decision to make the multiplicative 
inverse operation total by imposing that the multiplicative inverse of 
zero is zero.
All fields in which the multiplicative inverse of zero is zero, called
zero-totalized fields, are meadows, but not conversely.
In 2009, we found in~\cite{Ono83a} that meadows were already introduced 
almost 35 years earlier in~\cite{Kom75a}, where they go by the name of 
desirable pseudo-fields.
This discovery was first reported in~\cite{BM09g}.

We expect the total multiplicative inverse operation of zero-totalized 
fields, which is conceptually and technically simpler than the 
conventional partial multiplicative inverse operation, to be useful in 
\pagebreak[2]
among other things mathematics education.
Recently, in a discussion about research and development in mathematics 
education (M. van den Heuvel-Panhuizen, personal communication, 
March~25, 2014), we came across the alternative where the multiplicative 
inverse operation is made total by imposing that the multiplicative 
inverse of zero is one (see~\cite[pp.~158--160]{NT00a}).
At first sight, this seems a poor alternative.
However, it turns out to be difficult to substantiate this without 
working out the details of the variants of meadows in which the 
multiplicative inverse of zero is one.

By imposing that the multiplicative inverse of zero is zero, the 
multiplicative inverse operation is made an involution.
Therefore, we coined the name non-involutive meadow for a variant of a
meadow in which the multiplicative inverse of zero is not zero and
the name $n$-based non-involutive meadow ($n > 0$) for a \linebreak[2]
non-involutive meadow in which the multiplicative inverse of zero is 
$n$.
We con\-sider both zero-based meadow and involutive meadow to be 
alternative names for a meadow.
In this paper, we work out the details of one-based non-involutive 
meadows and non-involutive meadows.

We will among other things give equational axiomatizations of 
one-based non-involutive meadows and non-involutive meadows.
The axiomatization of non-involutive meadows allows of a uniform 
treatment of $n$-based non-involutive meadows for all $n > 0$.
It remains an open question whether there exists an equational 
axiomatization of the total algebras that are either involutive 
meadows or non-involutive meadows.

This paper is organized as follows.
First, we survey the axioms for meadows and related results
(Section~\ref{sect-Md}).
Next, we give the axioms for one-based non-involutive meadows and 
present results concerning the connections of one-based non-involutive 
meadows with meadows, one-totalized fields in general, and the 
one-totalized field of rational numbers (Section~\ref{sect-NiMd1}).
Then, we give the axioms for non-involutive meadows and present 
generalizations of the main results from the previous section to 
$n$-based non-involutive meadows (Section~\ref{sect-NiMd}).
Finally, we make some concluding remarks (Section~\ref{sect-concl}).

\section{Meadows}
\label{sect-Md}

In this section, we give a finite equational specification of the class
of all meadows and present related results.
For proofs, the reader is referred to earlier papers in which meadows 
have been investigated.
Meadows has been proposed as alternatives for fields with a purely 
equational axiomatization in~\cite{BT07a}.
They have been further investigated in 
e.g.~\cite{BBP13a,BHT09a,BM09g,BR08a} and applied in 
e.g.~\cite{BB09b,BM09d,BPZ07a}.

A meadow is a commutative ring with a multiplicative identity element 
and a total multiplicative inverse operation satisfying two equations 
which imply that the multiplicative inverse of zero is zero.
Hence, the signature of meadows includes the signature of a commutative 
ring with a multiplicative identity element.

The signature of commutative rings with a multiplicative identity
element consists of the following constants and operators:
\pagebreak[2]
\begin{itemize}
\item
the \emph{additive identity} constant $0$;
\item
the \emph{multiplicative identity} constant $1$;
\item
the binary \emph{addition} operator ${} + {}$;
\item
the binary \emph{multiplication} operator ${} \mmul {}$;
\item
the unary \emph{additive inverse} operator $- {}$;
\end{itemize}
The signature of meadows consists of the constants and operators from 
the signature of commutative rings with a multiplicative identity 
element and in addition:
\begin{itemize}
\item
the unary \emph{zero-totalized multiplicative inverse} operator 
${}\minv$.
\end{itemize}
We write:
\begin{ldispl}
\begin{array}{@{}l@{\;}c@{\;}l@{}}
\sigcr & \mathrm{for} & \set{0,1,{} + {},{} \mmul {}, - {}}\;,
\\
\sigmd & \mathrm{for} & \sigcr \union \set{{}\minv}\;.
\end{array}
\end{ldispl}%

We assume that there are infinitely many variables, including $x$, $y$
and $z$.
Terms are build as usual.
We use infix notation for the binary operators, prefix notation for the
unary operator $- {}$, and postfix notation for the unary 
operator~${}\minv$.
We use the usual precedence convention to reduce the need for
parentheses.
We introduce subtraction, division, and squaring as abbreviations: 
$p - q$ abbreviates $p + (-q)$, 
$p \mdiv q$ abbreviates $p \mmul q\minv$, and
$p^2$ abbreviates $p \mmul p$.
For each non-negative natural number $n$, we write $\ul{n}$ for the
numeral for $n$.
That is, the term $\ul{n}$ is defined by induction on $n$ as follows:
$\ul{0} = 0$ and $\ul{n+1} = \ul{n} + 1$.

The constants and operators from the signature of meadows are adopted 
from rational arithmetic, which gives an appropriate intuition about 
these constants and operators.

A commutative ring with a multiplicative identity element is a total 
algebra over the signature $\sigcr$ that satisfies the equations given 
in Table~\ref{eqns-CR}.%
\begin{table}[!t]
\caption
{Axioms of a commutative ring with a multiplicative identity element}
\label{eqns-CR}
\begin{eqntbl}
\begin{eqncol}
(x + y) + z = x + (y + z)                                             \\
x + y = y + x                                                         \\
x + 0 = x                                                             \\
x + (-x) = 0
\end{eqncol}
\qquad\quad
\begin{eqncol}
(x \mmul y) \mmul z = x \mmul (y \mmul z)                             \\
x \mmul y = y \mmul x                                                 \\
x \mmul 1 = x                                                         \\
x \mmul (y + z) = x \mmul y + x \mmul z
\end{eqncol}
\end{eqntbl}
\end{table}
A \emph{meadow} is a total algebra over the signature $\sigmd$ that 
satisfies the equations given in Tables~\ref{eqns-CR}
and~\ref{eqns-minv}.%
\footnote
{Throughout the paper, we use the term total algebra instead of algebra
 to emphasis that we mean an algebra without partial operations.}
\begin{table}[!t]
\caption{Additional axioms for a meadow}
\label{eqns-minv}
\begin{eqntbl}
\begin{eqncol}
{} \\[-3ex]
(x\minv)\minv = x                                      \hfill (2.1) \\
x \mmul (x \mmul x\minv) = x                           \qquad (2.2)
\end{eqncol}
\end{eqntbl}
\end{table}
We write:
\begin{ldispl}
\begin{array}{@{}l@{\;}c@{\;}l@{}}
\eqnscr  &
\multicolumn{2}{@{}l@{}}
 {\mathrm{for\; the\; set\; of\; all\; equations\; in\; Table\;
          \ref{eqns-CR}}\;,}
\\
\eqnsinv  &
\multicolumn{2}{@{}l@{}}
 {\mathrm{for\; the\; set\; of\; all\; equations\; in\; Table\;
          \ref{eqns-minv}}\;,}
\\
\eqnsmd & \mathrm{for} & \eqnscr \union \eqnsinv\;.
\end{array}
\end{ldispl}%
Equation~(2.1) is called \emph{Ref}, for reflection, and 
equation~(2.2) is called \emph{Ril}, for restricted inverse law.

Equations making the nature of the multiplicative inverse operation
in meadows more clear are derivable from the equations $\eqnsmd$.
\begin{proposition}
\label{prop-Md-derivable}
The equations 
\begin{ldispl}
0\minv = 0\;, \quad
1\minv = 1\;, \quad
(- x)\minv = - (x\minv)\;, \quad
(x \mmul y)\minv = x\minv \mmul y\minv 
\end{ldispl}%
are derivable from the equations $\eqnsmd$.
\end{proposition}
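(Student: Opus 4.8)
The plan is to first record a handful of consequences of the ring axioms $\eqnscr$ together with the two inverse axioms \emph{Ref} and \emph{Ril}, and then dispatch the four equations in increasing order of difficulty. From $\eqnscr$ alone one derives the familiar facts $0 \mmul x = 0$, $(-1) \mmul (-1) = 1$ and $(-1) \mmul x = -x$. From \emph{Ril}, rewritten with the help of commutativity as $x \mmul x\minv \mmul x = x$, and from the instance of \emph{Ril} at $x\minv$ combined with \emph{Ref} (which turns $(x\minv)\minv$ back into $x$), one obtains the companion law $x\minv \mmul x \mmul x\minv = x\minv$. These two laws say precisely that $x\minv$ is a \emph{pseudo-inverse} of $x$ in the sense of von Neumann regularity, and they are the workhorses of the whole argument.

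The two constant equations are then immediate. Instantiating \emph{Ril} at $1$ and simplifying with $x \mmul 1 = x$ gives $1\minv = 1$. Instantiating the companion law $x\minv \mmul x \mmul x\minv = x\minv$ at $0$ and using $0 \mmul x = 0$ collapses the left-hand side to $0$, whence $0\minv = 0$.

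The equation $(x \mmul y)\minv = x\minv \mmul y\minv$ is the crux, and I would obtain it from a uniqueness property of pseudo-inverses: in any commutative ring, if $a \mmul b \mmul a = a$, $b \mmul a \mmul b = b$, $a \mmul c \mmul a = a$ and $c \mmul a \mmul c = c$, then $b = c$. The argument is purely equational: putting $e = a \mmul b$ and $f = a \mmul c$, the first and third hypotheses give $e \mmul f = f$ and $f \mmul e = e$, so commutativity forces $e = f$, i.e.\ $a \mmul b = a \mmul c$; the chain $b = b \mmul a \mmul b = b \mmul (a \mmul c) = (b \mmul a) \mmul c = (c \mmul a) \mmul c = c \mmul a \mmul c = c$ then finishes it. I would apply this with $a = x \mmul y$, $b = x\minv \mmul y\minv$ and $c = (x \mmul y)\minv$. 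The hypotheses for $c$ are just \emph{Ril} and the companion law at $x \mmul y$; the hypotheses for $b$ follow by regrouping the product so that the two inverse laws act on the $x$-factors and the $y$-factors separately, e.g.\ $(x \mmul y) \mmul (x\minv \mmul y\minv) \mmul (x \mmul y) = (x \mmul x\minv \mmul x) \mmul (y \mmul y\minv \mmul y) = x \mmul y$. Since all hypotheses are derivable equations between concrete terms, the uniqueness argument inlines to a genuine equational derivation of $b = c$.

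Finally, $(- x)\minv = -(x\minv)$ follows the same pattern: applying the uniqueness property with $a = -x$, $b = -(x\minv)$ and $c = (-x)\minv$, one checks $(-x) \mmul (-(x\minv)) \mmul (-x) = -x$ and $(-(x\minv)) \mmul (-x) \mmul (-(x\minv)) = -(x\minv)$ using $(-1) \mmul (-1) = 1$ together with the two inverse laws, while the hypotheses for $c$ are again \emph{Ril} and the companion law at $-x$. Alternatively, once the product rule is available one has $(-x)\minv = ((-1) \mmul x)\minv = (-1)\minv \mmul x\minv = -(x\minv)$, where $(-1)\minv = -1$ comes from \emph{Ril} at $-1$. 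The main obstacle is the product rule: turning the pseudo-inverse uniqueness property into an explicit equational derivation; the remaining three equations are routine once the two pseudo-inverse laws are in hand.
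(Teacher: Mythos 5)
Your proof is correct, and it takes a genuinely different route from the paper, which in fact contains no derivation at all: it simply cites Theorem~2.2 of \cite{BT07a} for $0\minv = 0$ and Proposition~2.8 of \cite{BHT09a} for the last two equations, calling $1\minv = 1$ trivial. Your argument is self-contained and is organized around one structural observation: \emph{Ril}, read as $x \mmul x\minv \mmul x = x$, together with the companion law $x\minv \mmul x \mmul x\minv = x\minv$ (obtained by instantiating \emph{Ril} at $x\minv$ and rewriting $(x\minv)\minv$ to $x$ via \emph{Ref}) make $x\minv$ a von Neumann pseudo-inverse of $x$, and in a commutative ring an element has at most one pseudo-inverse satisfying both laws. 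I checked the details: the uniqueness lemma (via $e = a \mmul b$, $f = a \mmul c$, $e \mmul f = f$, $f \mmul e = e$, hence $a \mmul b = a \mmul c$ and then $b = b \mmul a \mmul b = c \mmul a \mmul c = c$) is a genuine equational derivation, the four hypotheses are verified in each application, and the constant cases follow by direct instantiation. What this buys over the paper's citations is uniformity and transparency: a single lemma delivers both $(x \mmul y)\minv = x\minv \mmul y\minv$ and $(-x)\minv = -(x\minv)$ (the latter even twice over, directly or via $(-1)\minv = -1$ and the product rule), and the argument makes visible that \emph{Ref} is needed only once, to establish the companion law. The cost is a page of routine regrouping under commutativity and associativity, which the cited sources presumably carry out in their own, possibly more ad hoc, fashion.
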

\begin{proof}
Theorem~2.2 from~\cite{BT07a} is concerned with the derivability of the 
first equation and Proposition~2.8 from~\cite{BHT09a} is concerned with 
the derivability of the last two equations.
The derivability of the second equation is trivial.
\qed
\end{proof}

The advantage of working with a total multiplicative inverse operation
lies in the fact that conditions like
$x \neq 0$ in $x \neq 0 \Implies x \mmul x\minv = 1$ are not needed to 
guarantee meaning.

A \emph{non-trivial meadow} is a meadow that satisfies the 
\emph{separation axiom} 
\begin{ldispl}
0 \neq 1\;;
\end{ldispl}%
and 
a \emph{cancellation meadow} is a meadow that satisfies the 
\emph{cancellation axiom} 
\begin{ldispl}
x \neq 0 \And x \mmul y = x \mmul z \Implies y = z 
\end{ldispl}%
or, equivalently, 
the \emph{general inverse law} 
\begin{ldispl}
x \neq 0 \Implies x \mmul x\minv = 1\;.
\end{ldispl}%

A \emph{totalized field} is a total algebra over the signature $\sigmd$ 
that satisfies the equations $\eqnscr$, the separation axiom, and the 
general inverse law.
A \emph{zero-totalized field} is a totalized field that satisfies in 
addition the equation $0\minv = 0$.
\begin{proposition}
\label{prop-origin-eqnsinv}
The equations $\eqnsinv$ are derivable from the axiomatization of 
zero-totalized fields given above.
\end{proposition}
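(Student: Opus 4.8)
The plan is to derive the two equations of $\eqnsinv$---namely \emph{Ref}, $(x\minv)\minv = x$, and \emph{Ril}, $x \mmul (x \mmul x\minv) = x$---by first-order reasoning from the axioms of a zero-totalized field, splitting in each case on whether $x = 0$. Since the general inverse law is a conditional equation and the separation axiom is an inequality, the derivation is not purely equational, and a case analysis on the literal $x = 0$ is the natural device. I would begin by recording the standard ring lemma $x \mmul 0 = 0$, obtained from $x \mmul 0 = x \mmul (0 + 0) = x \mmul 0 + x \mmul 0$ by adding $-(x \mmul 0)$ to both sides; this uses only the equations $\eqnscr$.

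For \emph{Ril} the argument is short. When $x = 0$, both sides reduce to $0$ using only $x \mmul 0 = 0$. When $x \neq 0$, the general inverse law gives $x \mmul x\minv = 1$, whence $x \mmul (x \mmul x\minv) = x \mmul 1 = x$. In either case the equation holds, so it is derivable.

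For \emph{Ref} the case $x = 0$ is immediate from the axiom $0\minv = 0$, which yields $(0\minv)\minv = 0\minv = 0$. The case $x \neq 0$ is the substantive one. Here I would first establish that $x\minv \neq 0$: if $x\minv$ were $0$, then $1 = x \mmul x\minv = x \mmul 0 = 0$ by the general inverse law and the ring lemma, contradicting the separation axiom $0 \neq 1$. With $x\minv \neq 0$ in hand, the general inverse law applies to $x\minv$ as well, giving $(x\minv)\minv \mmul x\minv = 1$ after using commutativity. Combining this with $x\minv \mmul x = 1$ yields the chain $(x\minv)\minv = (x\minv)\minv \mmul (x\minv \mmul x) = ((x\minv)\minv \mmul x\minv) \mmul x = 1 \mmul x = x$, using only associativity, commutativity, the multiplicative identity law, and the two instances of the inverse law.

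The main obstacle I anticipate is precisely the $x \neq 0$ subcase of \emph{Ref}: it is the only step that genuinely needs the separation axiom, and it uses it indirectly, by ruling out $x\minv = 0$ so that the general inverse law can be invoked a second time on $x\minv$. Everything else is routine manipulation inside a commutative ring with a multiplicative identity element.
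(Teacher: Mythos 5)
Your proof is correct. The paper itself gives no argument for this proposition---it only cites Lemma~2.5 of~\cite{BT07a}---so there is nothing in the paper to compare against line by line; your derivation, splitting on $x = 0$ versus $x \neq 0$, handling the zero case with $0\minv = 0$ and the ring identity $x \mmul 0 = 0$, and handling the nonzero case by applying the general inverse law to $x$ and then (after using the separation axiom to rule out $x\minv = 0$) to $x\minv$, is the standard complete proof of \emph{Ref} and \emph{Ril}, and the case analysis is legitimate since derivability here is first-order, not purely equational.
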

\begin{proof}
This is Lemma~2.5 from~\cite{BT07a}.
\qed
\end{proof}
The following is a corollary of Proposition~\ref{prop-origin-eqnsinv}.
\begin{corollary}
\label{corollary-cMd-F0}
The class of all non-trivial cancellation meadows and the class of all
zero-totalized fields are the same.
\end{corollary}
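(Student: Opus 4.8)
The plan is to prove the two inclusions separately, drawing entirely on the two preceding propositions. Both classes consist of total algebras over the same signature $\sigmd$, so it suffices to compare the axioms that each class requires and to check that these axiom sets are mutually derivable.

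First I would show that every zero-totalized field is a non-trivial cancellation meadow. A zero-totalized field satisfies $\eqnscr$, the separation axiom, the general inverse law, and the equation $0\minv = 0$. By Proposition~\ref{prop-origin-eqnsinv}, the equations $\eqnsinv$ are derivable from exactly this axiomatization, so a zero-totalized field satisfies $\eqnscr \union \eqnsinv = \eqnsmd$ and is therefore a meadow. Since it also satisfies the separation axiom it is non-trivial, and since it satisfies the general inverse law it is a cancellation meadow, the general inverse law being, as recorded in the text, equivalent to the cancellation axiom. Hence every zero-totalized field is a non-trivial cancellation meadow.

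Conversely, I would show that every non-trivial cancellation meadow is a zero-totalized field. Such an algebra satisfies $\eqnsmd$, hence in particular $\eqnscr$; it satisfies the separation axiom by non-triviality; and it satisfies the general inverse law by the cancellation property. These three together are precisely the axioms of a totalized field. It then remains only to verify the defining extra equation $0\minv = 0$, and this is immediate from Proposition~\ref{prop-Md-derivable}, which states that $0\minv = 0$ is derivable from $\eqnsmd$. Thus every non-trivial cancellation meadow is a zero-totalized field, and combining the two inclusions yields the claimed equality of classes.

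I do not expect a serious obstacle: the whole argument reduces to checking that the two defining axiom sets are mutually derivable, and the only non-routine derivabilities, namely $\eqnsinv$ from the field axioms and $0\minv = 0$ from $\eqnsmd$, have already been supplied by Propositions~\ref{prop-origin-eqnsinv} and~\ref{prop-Md-derivable} respectively. The single point demanding a little care is the interchangeable use of the cancellation axiom and the general inverse law, which the text has already declared equivalent and which I would simply invoke.
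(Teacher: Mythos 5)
Your proof is correct and is essentially the argument the paper intends: the paper presents the statement as an immediate corollary of Proposition~\ref{prop-origin-eqnsinv}, and your write-up just makes explicit the two inclusions, using Proposition~\ref{prop-origin-eqnsinv} for one direction and Proposition~\ref{prop-Md-derivable} (for $0\minv = 0$) together with the stated equivalence of the cancellation axiom and the general inverse law for the other. No gaps.
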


Not all non-trivial meadows are zero-totalized fields, e.g.\ the initial 
meadow is not a zero-totalized field.
Nevertheless, we have the following theorem.
\begin{theorem}
\label{theorem-Md-F0}
The equational theory of meadows and the equational theory of 
zero-totalized fields are the same.
\end{theorem}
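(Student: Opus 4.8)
The plan is to prove the two inclusions between the equational theories separately, and for this it is convenient to phrase everything in terms of the varieties involved. Write $\mathrm{Md}$ for the class of all meadows and $\mathrm{F}_0$ for the class of all zero-totalized fields. Since the equational theory of a class of algebras coincides with the equational theory of the variety it generates, it suffices to establish that $\mathrm{Md}$ is exactly the variety generated by $\mathrm{F}_0$, i.e.\ that $\mathrm{Md} = \mathbf{HSP}(\mathrm{F}_0)$; the equality of the two equational theories then follows immediately.

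For the inclusion $\mathbf{HSP}(\mathrm{F}_0) \subseteq \mathrm{Md}$ I would argue as follows. By Proposition~\ref{prop-origin-eqnsinv} every zero-totalized field satisfies the equations $\eqnsinv$, and it satisfies $\eqnscr$ by definition, so every zero-totalized field is a meadow. As $\mathrm{Md}$ is a variety (it is axiomatized by $\eqnsmd$) it is closed under $\mathbf{H}$, $\mathbf{S}$ and $\mathbf{P}$, whence $\mathbf{HSP}(\mathrm{F}_0) \subseteq \mathbf{HSP}(\mathrm{Md}) = \mathrm{Md}$. In particular, every equation valid in all meadows is valid in all zero-totalized fields, which is one of the two required inclusions of theories.

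For the converse inclusion $\mathrm{Md} \subseteq \mathbf{HSP}(\mathrm{F}_0)$ the key ingredient is a subdirect representation: I would show that every meadow embeds into a direct product of zero-totalized fields, giving $\mathrm{Md} \subseteq \mathbf{SP}(\mathrm{F}_0)$. The starting observation is that the restricted inverse law Ril, rewritten as $x = x^2 \mmul x\minv$, says precisely that the underlying commutative ring of a meadow is (von Neumann) regular. For a commutative regular ring the standard structure theory yields a subdirect embedding into a product of fields, the factors arising as quotients by the (maximal, equivalently prime) ideals. One then verifies that the totalized inverse operation ${}\minv$ descends to each factor and there coincides with the zero-totalized inverse: on a nonzero residue class $a$ the law Ril forces $a \mmul a\minv = 1$, so ${}\minv$ is the genuine field inverse, while $0\minv = 0$ holds in each factor by Proposition~\ref{prop-Md-derivable}; hence every factor is a zero-totalized field.

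The main obstacle is exactly this last step: organizing the classical subdirect decomposition as a decomposition of $\sigmd$-algebras rather than merely of rings, so that the projections respect ${}\minv$ and map onto honest zero-totalized fields. The trivial one-element meadow should be noted as the degenerate case, realized as the empty subdirect product in $\mathbf{P}(\mathrm{F}_0)$; it satisfies every equation and causes no trouble. Once the representation $\mathrm{Md} \subseteq \mathbf{SP}(\mathrm{F}_0)$ is in place, closure of equational validity under $\mathbf{S}$ and $\mathbf{P}$ gives that every equation valid in all zero-totalized fields is valid in all meadows, completing the second inclusion and yielding $\mathrm{Md} = \mathbf{HSP}(\mathrm{F}_0)$.
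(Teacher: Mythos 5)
The paper offers no proof of its own here --- it cites Theorem~3.10 of~\cite{BHT09a} --- and your subdirect-representation argument (\emph{Ril} makes the underlying commutative ring von Neumann regular, hence a subdirect product of its residue fields, onto which ${}\minv$ descends as the zero-totalized inverse, while $\mathbf{HSP}(\mathrm{F}_0)\subseteq\mathrm{Md}$ follows from Proposition~\ref{prop-origin-eqnsinv}) is essentially the argument given in that reference, and it is correct. The step you flag as the main obstacle does go through: for a maximal ideal $M$, \emph{Ril} applied to $x\minv$ together with \emph{Ref} yields $x\minv = (x\minv \mmul x\minv) \mmul x$, so $x \in M$ implies $x\minv \in M$, and on nonzero residues cancellation in the field $R/M$ turns $\overline{x}^{\,2}\,\overline{x\minv}=\overline{x}$ into $\overline{x}\,\overline{x\minv}=1$; hence each projection is a $\sigmd$-homomorphism onto a zero-totalized field, with the trivial meadow handled as the empty product exactly as you note.
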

\begin{proof}
This is Theorem~3.10 from~\cite{BHT09a}.
\qed
\end{proof}
Theorem~\ref{theorem-Md-F0} can be read as follows: $\eqnsmd$ is a 
finite basis for the equational theory of cancellation meadows.

As a consequence of Theorem~\ref{theorem-Md-F0}, the separation axiom 
and the cancellation axiom may be used to show that an equation is 
derivable from the equations~$\eqnsmd$.

The cancellation meadow that we are most interested in is $\Ratz$, the 
zero-totalized field of rational numbers.
$\Ratz$ differs from the field of rational numbers only in that the
multiplicative inverse of zero is zero.%
\begin{theorem}
\label{theorem-Ratz}
$\Ratz$ is the initial algebra among the total algebras over the 
signature $\sigmd$ that satisfy the equations
\begin{ldispl}
\eqnsmd \union \set{(1 + x^2 + y^2) \mmul (1 + x^2 + y^2)\minv = 1}\;.
\end{ldispl}%
\end{theorem}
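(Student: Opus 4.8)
The plan is to establish initiality in the standard way for an equational specification. Writing $E$ for $\eqnsmd \union \set{(1 + x^2 + y^2) \mmul (1 + x^2 + y^2)\minv = 1}$, I would first verify soundness, namely that $\Ratz$ is a model of $E$, and then show that the unique homomorphism from the initial model of $E$ to $\Ratz$ is an isomorphism. Soundness is routine: $\Ratz$ is a cancellation meadow, hence a zero-totalized field by Corollary~\ref{corollary-cMd-F0}, and for all rational $a, b$ the value of $1 + a^2 + b^2$ is at least $1$ and therefore nonzero, so the general inverse law yields the extra equation.

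For initiality, let $I$ denote the initial algebra of $E$, i.e.\ the quotient of the ground term algebra over $\sigmd$ by provable equality from $E$. There is a unique homomorphism $\phi \colon I \to \Ratz$, and it suffices to prove $\phi$ bijective, equivalently that for ground terms $s, t$ one has $E \vdash s = t$ if and only if $s$ and $t$ denote the same rational number in $\Ratz$. The ``only if'' direction is soundness. Surjectivity of $\phi$ (the absence of junk) is immediate, since every rational number is the value of a ground term of the form $\ul{a} \mdiv \ul{b}$ or its additive inverse. The substance is the ``if'' direction (the absence of confusion), which I would settle by a normal-form argument: exhibit for each rational number a canonical ground term --- $\ul{0}$, or a term $\ul{a} \mdiv \ul{b}$ up to sign with $a, b$ coprime and $b \geq 1$ --- show that every ground term is provably equal to the canonical term denoting its value, and observe that distinct canonical terms denote distinct rationals (clear, as $\Ratz$ really is the field of rationals). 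No confusion then follows, since equal values force equal canonical forms and hence provable equality.

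The key step, and the main obstacle, is the lemma that every positive numeral is provably invertible, that is, $E \vdash \ul{n} \mmul \ul{n}\minv = 1$ for each $n \geq 1$; this is exactly what is needed to combine fractions and reach the canonical form. The lemma genuinely requires the extra equation: it fails in cancellation meadows of prime characteristic, where $\ul{p} = 0$, so it cannot be obtained from $\eqnsmd$ alone via Theorem~\ref{theorem-Md-F0}. My approach would be to let $S$ be the set of $n \geq 1$ for which $\ul{n} \mmul \ul{n}\minv = 1$ is derivable, to note that $S$ contains every integer of the form $1 + a^2 + b^2$ (a direct instance of the extra equation under the substitution $x := \ul{a}$, $y := \ul{b}$, using that $1 + \ul{a}^2 + \ul{b}^2$ provably equals the numeral $\ul{1 + a^2 + b^2}$), and to show, using the derivable multiplicativity of inverse $(x \mmul y)\minv = x\minv \mmul y\minv$ from Proposition~\ref{prop-Md-derivable}, that $S$ is closed under multiplication and under the operation of dividing a member by a member whenever the quotient is a positive integer. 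The number-theoretic heart is then to prove that the multiplicative structure so generated exhausts the positive integers: for instance $\ul{2}$ is handled by $2 = 1 + \ul{1}^2 + \ul{0}^2$, the prime $7$, for which $7 - 1$ is not a sum of two squares, by factoring $14 = 1 + \ul{2}^2 + \ul{3}^2$ and dividing out $\ul{2} \in S$, and so on. Establishing this closure for all positive integers, through the classical sum-of-two-squares and four-square machinery, is where I expect the real difficulty to lie; once it is in place, reduction to normal form, and hence the completeness half of the theorem, follows by the field-style manipulations licensed by the now-available inverses and, where convenient, by the cancellation and separation axioms as sanctioned after Theorem~\ref{theorem-Md-F0}.
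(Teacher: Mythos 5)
Your proposal is correct and follows essentially the same route as the proof the paper invokes: the paper gives no argument of its own but cites Theorem~9 of~\cite{BM09g}, which (in the tradition of the Bergstra--Tucker initial algebra specification of the rationals) establishes initiality exactly as you outline --- soundness and no-junk are routine, and no-confusion reduces to the provable invertibility of every positive numeral, obtained from the extra axiom together with the classical fact that every prime divides some $1 + a^2 + b^2$, followed by reduction of ground terms to canonical fractions. The number-theoretic closure step you flag as the main difficulty is precisely that classical lemma (a pigeonhole argument modulo $p$ plus strong induction on primes), so your decomposition is the standard one.
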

\begin{proof}
This is Theorem~9 from~\cite{BM09g}. 
\qed
\end{proof}

The following is an outstanding question with regard to meadows:
does there exist an equational specification of the class of all meadows 
with less than $10$ equations?

\section{One-Based Non-involutive Meadows}
\label{sect-NiMd1}

By imposing that the multiplicative inverse of zero is zero, the 
multiplicative inverse operation of a meadow is made an involution.
Therefore, we coined the name non-involutive meadow for a variant of a
meadow in which the multiplicative inverse of zero is not zero and
the name one-based non-involutive meadow for a non-involutive meadow 
in which the multiplicative inverse of zero is one.
In this section, we give a finite equational specification of the class
of all one-based non-involutive meadows.
Moreover, we present results concerning the connections of one-based 
non-involutive meadows with meadows, one-totalized fields in general, 
and the one-totalized field of rational numbers.

A one-based non-involutive meadow is a commutative ring with a 
multiplicative identity element and a total multiplicative inverse 
operation satisfying four equations which imply that the multiplicative 
inverse of zero is one.

The signature of one-based non-involutive meadows consists of the 
constants and operators from the signature of commutative rings with a 
multiplicative identity element and in addition:
\begin{itemize}
\item
the unary \emph{one-totalized multiplicative inverse} operator 
${}\niminvi$.%
\footnote
{We use different symbols for the zero-totalized and one-totalized 
 multiplicative inverse operations to allow of defining these operations
in terms of each other.}
\end{itemize}
We write:
\begin{ldispl}
\begin{array}{@{}l@{\;}c@{\;}l@{}}
\signimdi & \mathrm{for} & \sigcr \union \set{{}\niminvi}\;.
\end{array}
\end{ldispl}%
We use postfix notation for the unary operator~${}\niminvi$.

A \emph{one-based non-involutive meadow} is a total algebra over the 
signature $\signimdi$ that satisfies the equations given in 
Tables~\ref{eqns-CR} and~\ref{eqns-niminvi}.%
\begin{table}[!t]
\caption{Additional axioms for a one-based non-involutive meadow}
\label{eqns-niminvi}
\begin{eqntbl}
\begin{eqncol}
{} \\[-3ex]
(x\niminvi)\niminvi = x + (1 - x \mmul x\niminvi)        \hfill (3.1) \\
x \mmul (x \mmul x\niminvi) = x                          \hfill (3.2) \\
x\niminvi \mmul (x\niminvi)\niminvi = 1                  \hfill (3.3) \\
(x \mmul (x\niminvi \mmul x\niminvi))\niminvi \mmul (x \mmul x\niminvi)
 = x                                                     \qquad (3.4)
\end{eqncol}
\end{eqntbl}
\end{table}
We write:
\begin{ldispl}
\begin{array}{@{}l@{\;}c@{\;}l@{}}
\eqnsniinvi  &
\multicolumn{2}{@{}l@{}}
 {\mathrm{for\; the\; set\; of\; all\; equations\; in\; Table\;
          \ref{eqns-niminvi}}\;,}
\\
\eqnsnimdi & \mathrm{for} & \eqnscr \union \eqnsniinvi\;.
\end{array}
\end{ldispl}%
Apart from the different symbols used for the multiplicative inverse 
operation, equation~(3.1) is \emph{Ref} adapted to one-totalization of 
the multiplicative inverse operation and equation~(3.2) is simply 
\emph{Ril}.
The counterpart of equation~(3.4), viz.\ 
$(x \mmul (x\minv \mmul x\minv))\minv \mmul (x \mmul x\minv) = x$,
is derivable from~(2.1) and~(2.2) and consequently does hold in meadows 
as well.
However, the counterpart of equation~(3.3), viz.\ 
$x\minv \mmul (x\minv)\minv = 1$, does not hold in meadows.

\begin{proposition}
\label{prop-derivable-eqn-niminvi-0}
The equations $0\niminvi = 1$ and $1\niminvi = 1$ are derivable from the 
equations $\eqnsnimdi$.
\end{proposition}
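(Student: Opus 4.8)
The plan is to derive each of the two equations $0\niminvi = 1$ and $1\niminvi = 1$ from $\eqnsnimdi$ by specializing the axioms in Table~\ref{eqns-niminvi} at the relevant constants and simplifying using the commutative-ring axioms $\eqnscr$. I would start with $1\niminvi = 1$, since I expect it to fall out of \emph{Ril} (equation~(3.2)). Indeed, instantiating (3.2) with $x := 1$ gives $1 \mmul (1 \mmul 1\niminvi) = 1$, and repeated use of the multiplicative-identity law $x \mmul 1 = x$ (together with commutativity) collapses the left-hand side to $1\niminvi$, yielding $1\niminvi = 1$ directly.

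For $0\niminvi = 1$ the situation is more delicate, because the natural instantiations produce terms of the form $0\niminvi$ that do not simplify away by the ring axioms alone; the whole point of one-totalization is that $0\niminvi$ is a genuinely new value whose identification with $1$ must be forced by the inverse axioms. The plan is to exploit equation~(3.1), \emph{Ref} adapted to one-totalization, instantiated at $x := 0$. This gives $(0\niminvi)\niminvi = 0 + (1 - 0 \mmul 0\niminvi)$. Since $0 \mmul t = 0$ is derivable in any commutative ring (from distributivity and $x + 0 = x$), the right-hand side simplifies to $0 + (1 - 0) = 1$, so I obtain $(0\niminvi)\niminvi = 1$. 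This tells me that applying the inverse operator twice to $0$ yields $1$, but I still need to strip off one application of ${}\niminvi$ to reach $0\niminvi = 1$ itself.

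To bridge that gap I would turn to equation~(3.3), $x\niminvi \mmul (x\niminvi)\niminvi = 1$, instantiated at $x := 0$, which gives $0\niminvi \mmul (0\niminvi)\niminvi = 1$. Substituting the fact $(0\niminvi)\niminvi = 1$ just obtained, the left-hand side becomes $0\niminvi \mmul 1$, which reduces to $0\niminvi$ by the identity law, so $0\niminvi = 1$ follows. I would write the two derivations as short equational chains, citing $\eqnscr$ for the ring-level simplifications (in particular $0 \mmul x = 0$, $x \mmul 1 = x$, and $1 - 0 = 1$) and citing (3.1)--(3.3) for the inverse-specific steps.

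The main obstacle I anticipate is purely bookkeeping rather than conceptual: making sure the intermediate term $(0\niminvi)\niminvi$ is handled correctly, since a careless reader might expect \emph{Ref} to collapse a double inverse to the identity as in ordinary meadows, whereas here the correction term $1 - x \mmul x\niminvi$ in (3.1) is exactly what makes the computation at $x := 0$ give $1$ instead of $0$. I would therefore take care to display the evaluation of the correction term explicitly. Apart from that, both derivations are short and rely only on (3.1), (3.2), (3.3) and the commutative-ring axioms; equation~(3.4) is not needed for this proposition.
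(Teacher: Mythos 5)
Your proposal is correct and follows exactly the paper's own argument: $(0\niminvi)\niminvi = 1$ from (3.1) at $x := 0$, combined with (3.3) at $x := 0$ to conclude $0\niminvi = 1$, and (3.2) at $x := 1$ for $1\niminvi = 1$. The extra detail you supply about evaluating the correction term and the ring-level simplifications is sound but not a different route.
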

\begin{proof}
We have $0\niminvi \mmul (0\niminvi)\niminvi = 1$ by~(3.3) and 
$(0\niminvi)\niminvi = 1$ by~(3.1).
From these equations, it follows immediately that $0\niminvi = 1$.
We have $1\niminvi = 1$ by~(3.2).
\qed
\end{proof}

The zero-totalized multiplicative inverse operator can be explicitly
defined in terms of the one-totalized multiplicative inverse operator
by the equation $x\minv = x \mmul (x\niminvi \mmul x\niminvi)$ and
the one-totalized multiplicative inverse operator can be explicitly
defined in terms of the zero-totalized multiplicative inverse operator
by the equation $x\niminvi = x\minv + (1 - x \mmul x\minv)$.

The following two lemmas will be used in proofs of subsequent theorems.
\begin{lemma}
\label{lemma-defeqns-1}
The following is derivable from 
$\eqnsmd \union \set{x\niminvi = x\minv + (1 - x \mmul x\minv)}$ 
as well as
$\eqnsnimdi \union \set{x\minv = x \mmul (x\niminvi \mmul x\niminvi)}$:
\begin{ldispl}
x \mmul x\niminvi = x \mmul x\minv\;.
\end{ldispl}%
\end{lemma}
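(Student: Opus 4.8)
The plan is to treat the two axiom sets separately; in both cases the derivation is short and follows the same idea, namely to multiply the pertinent defining equation by $x$ and then eliminate the surplus terms with a restricted inverse law.

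First I would work with $\eqnsmd \union \set{x\niminvi = x\minv + (1 - x \mmul x\minv)}$. Starting from the defining equation $x\niminvi = x\minv + (1 - x \mmul x\minv)$, I multiply both sides by $x$ and distribute, obtaining $x \mmul x\niminvi = x \mmul x\minv + x \mmul (1 - x \mmul x\minv)$. It then suffices to show that the second summand vanishes. By distributivity and $x \mmul 1 = x$ this summand equals $x - x \mmul (x \mmul x\minv)$, and an application of \emph{Ril}~(2.2), $x \mmul (x \mmul x\minv) = x$, collapses it to $x - x = 0$. Hence $x \mmul x\niminvi = x \mmul x\minv$. The only ring fact used beyond the equations already displayed is that $x \mmul (-y) = -(x \mmul y)$, which is derivable from $\eqnscr$.

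Next I would work with $\eqnsnimdi \union \set{x\minv = x \mmul (x\niminvi \mmul x\niminvi)}$, starting from the defining equation in the other direction. Multiplying $x\minv = x \mmul (x\niminvi \mmul x\niminvi)$ by $x$ gives $x \mmul x\minv = x \mmul x \mmul x\niminvi \mmul x\niminvi$, where I have used associativity and commutativity to drop parentheses. Reassociating the first three factors and applying equation~(3.2), $x \mmul (x \mmul x\niminvi) = x$, reduces $x \mmul x \mmul x\niminvi$ to $x$, so the right-hand side becomes $x \mmul x\niminvi$. This yields $x \mmul x\minv = x \mmul x\niminvi$, as required.

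I do not expect a genuine obstacle here: both derivations are purely equational and rely only on the commutative-ring axioms together with a single restricted inverse law in each case, namely \emph{Ril}~(2.2) on the meadow side and its non-involutive analogue~(3.2) on the one-based side. The one point demanding minor care is the bookkeeping of associativity and commutativity when rearranging the product $x \mmul x \mmul x\niminvi \mmul x\niminvi$, so that the subterm matching the left-hand side of~(3.2) is exposed; this is routine rather than subtle.
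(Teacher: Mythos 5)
Your proposal is correct and follows essentially the same route as the paper's own proof: multiply each defining equation by $x$, then apply \emph{Ril}~(2.2) on the meadow side and its analogue~(3.2) on the one-based side to eliminate the extra terms. No issues.
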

\begin{proof}
We have 
$x \mmul x\niminvi = x \mmul x\minv + (x - x \mmul (x \mmul x\minv))$ 
by $x\niminvi = x\minv + (1 - x \mmul x\minv)$.
From this equation, it follows by (2.2) that 
$x \mmul x\niminvi = x \mmul x\minv$.
We have 
$x \mmul x\minv = (x \mmul (x \mmul x\niminvi)) \mmul x\niminvi$ 
by $x\minv = x \mmul (x\niminvi \mmul x\niminvi)$.
From this equation, it follows by (3.2) that 
$x \mmul x\minv = x \mmul x\niminvi$.
\qed
\end{proof}
\begin{lemma}
\label{lemma-defeqns-2}
The conditional equations given in Table~\ref{forms-derivable} are 
derivable from
$\eqnsmd \union \set{x\niminvi = x\minv + (1 - x \mmul x\minv)}$:
\begin{table}[!t]
\caption{Formulas concerning ${}\niminvi$ and ${}\minv$}
\label{forms-derivable}
\begin{eqntbl}
\begin{eqncol}
{} \\[-3ex]
x \neq 0 \Implies x\niminvi = x\minv
\hfill~(4.1) \\  
x \neq 0 \Implies (x\niminvi)\niminvi = (x\minv)\minv
\hfill~(4.2) \\ 
x \neq 0 \Implies x \mmul (x\niminvi \mmul x\niminvi) = x\minv
\hfill~(4.3) \\ 
x \neq 0 \Implies (x \mmul (x\niminvi \mmul x\niminvi))\niminvi = x
\qquad~(4.4) \\ 
x = 0    \Implies x\niminvi = 1
\hfill~(4.5) \\ 
x = 0    \Implies (x\niminvi)\niminvi = 1
\hfill~(4.6) 
\end{eqncol}
\end{eqntbl}
\end{table}
\end{lemma}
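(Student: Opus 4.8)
The plan is to treat the two possible antecedents separately, since under each of them the defining equation $x\niminvi = x\minv + (1 - x \mmul x\minv)$ collapses to something trivial. For the antecedent $x \neq 0$ I would invoke the general inverse law, which I am entitled to use by the remark following Theorem~\ref{theorem-Md-F0}: in order to show a conditional equation derivable from $\eqnsmd$ it suffices to derive it while assuming the separation and cancellation axioms, and under $x \neq 0$ the general inverse law supplies $x \mmul x\minv = 1$. Substituting this into the defining equation at once gives $x\niminvi = x\minv + (1 - 1) = x\minv$, which is precisely~(4.1). The antecedent $x = 0$ I would handle by plain substitution, using $0\minv = 0$ and $1\minv = 1$ from Proposition~\ref{prop-Md-derivable}.

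For the remaining $x \neq 0$ cases I would bootstrap everything from~(4.1). First I would record the auxiliary fact that $x \neq 0$ implies $x\minv \neq 0$: if $x\minv = 0$ held, then \emph{Ref}~(2.1) would give $x = (x\minv)\minv = 0\minv = 0$, a contradiction. With this in hand,~(4.2) follows by applying~(4.1) twice, once to $x$ and once to $x\minv$, so that $(x\niminvi)\niminvi = (x\minv)\niminvi = (x\minv)\minv$. Equation~(4.3) follows by rewriting $x\niminvi$ to $x\minv$ via~(4.1) and then using $x \mmul x\minv = 1$ to compute $x \mmul (x\minv \mmul x\minv) = (x \mmul x\minv) \mmul x\minv = x\minv$. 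Equation~(4.4) then combines~(4.3) with the instance of~(4.1) at $x\minv$ and \emph{Ref}~(2.1): $(x \mmul (x\niminvi \mmul x\niminvi))\niminvi = (x\minv)\niminvi = (x\minv)\minv = x$.

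For the $x = 0$ cases,~(4.5) is immediate, since substituting and using $0\minv = 0$ yields $0\niminvi = 0 + (1 - 0 \mmul 0) = 1$; and~(4.6) follows from~(4.5) together with $1\niminvi = 1$, which is the instance of the defining equation at $1$ computed with $1\minv = 1$. I expect the only genuine subtlety to be the justification for appealing to the cancellation axiom under the $x \neq 0$ hypotheses, that is, making precise that a conditional equation $x \neq 0 \Implies p = q$ is derivable from $\eqnsmd$ exactly when $p = q$ becomes derivable after adjoining $x \mmul x\minv = 1$. Everything downstream of~(4.1) is then routine meadow calculation.
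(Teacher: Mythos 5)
Your proposal is correct and follows essentially the same route as the paper's own proof: a case split on $x \neq 0$ versus $x = 0$, appeal to the general inverse law licensed by Theorem~\ref{theorem-Md-F0}, the auxiliary fact $x \neq 0 \Implies x\minv \neq 0$, and then (4.2)--(4.4) bootstrapped from~(4.1). The only (immaterial) differences are that the paper derives the auxiliary fact from \emph{Ril} and the separation axiom rather than from \emph{Ref}, and derives~(4.3) from \emph{Ref} and \emph{Ril} rather than from \emph{Gil}.
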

\begin{proof}
Recall that the equations $0\minv = 0$ and $1\minv = 1$ are derivable 
from $\eqnsmd$.
By Theorem~\ref{theorem-Md-F0}, we may use the general inverse law 
(\emph{Gil}) and the separation axiom (\emph{Sep}) to prove derivability 
from $\eqnsmd$.
It follows from~(2.2) and \emph{Sep} that 
$x \neq 0 \Implies x\minv \neq 0\;$~(*).

The derivability of (4.1)--(4.6) from $\eqnsmd$ and the defining 
equation of ${}\niminvi$ is proved as follows:
\begin{itemize}
\sloppy
\item
(4.1) follows immediately from the defining equation of ${}\niminvi$ and
\emph{Gil};
\item
(4.2) follows immediately from~(*) and~(4.1);
\item
(4.3) follows immediately from~(4.1), (2.1), and~(2.2);
\item
(4.4) follows immediately from~(4.3), (*), (4.1), and~(2.1);
\item
(4.5) follows immediately from the defining equation of ${}\niminvi$ and 
$0\minv = 0$;
\item
(4.6) follows immediately from~(4.5), the defining equation of 
${}\niminvi$, and \mbox{$1\minv = 1$}. 
\qed
\end{itemize}
\end{proof}

Despite the different multiplicative inverse operators, $\eqnsmd$ and 
$\eqnsnimdi$ are essentially the same in a well-defined sense.
\begin{theorem}
\label{theorem-defeqv-Md-NiMd1}
$\eqnsmd$ is definitionally equivalent to $\eqnsnimdi$,%
\footnote
{The notion of definitional equivalence originates from~\cite{Bou65a},
 where it was introduced, in the setting of first-order theories, under 
 the name of synonymy.
 In~\cite{Tay79a}, the notion of definitional equivalence was introduced
 in the setting of equational theories under the ambiguous name of
 equivalence.
 An abridged version of~\cite{Tay79a} appears in~\cite{Gra08a}.}
i.e.
\begin{ldispl}
\eqnsmd \union \set{x\niminvi = x\minv + (1 - x \mmul x\minv)}
 \vdash
\eqnsnimdi \union \set{x\minv = x \mmul (x\niminvi \mmul x\niminvi)}
\\
\hfill \mathrm{and} \hfill \phantom{\,}
\\
\eqnsnimdi \union \set{x\minv = x \mmul (x\niminvi \mmul x\niminvi)}
 \vdash
\eqnsmd \union \set{x\niminvi = x\minv + (1 - x \mmul x\minv)}\;.
\end{ldispl}
\end{theorem}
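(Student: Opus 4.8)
The plan is to prove the two derivabilities separately, in each case exploiting the added definitional equation to eliminate one multiplicative inverse operator in favour of the other. Throughout I write $D_{\niminvi}$ for $x\niminvi = x\minv + (1 - x \mmul x\minv)$ and $D_{\minv}$ for $x\minv = x \mmul (x\niminvi \mmul x\niminvi)$, and I use Lemma~\ref{lemma-defeqns-1} freely, since it supplies $x \mmul x\niminvi = x \mmul x\minv$ on both sides.

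For the first derivability, $\eqnsmd \union \set{D_{\niminvi}} \vdash \eqnsnimdi \union \set{D_{\minv}}$, the key point is that $D_{\niminvi}$ lets me replace every occurrence of ${}\niminvi$ by an expression in ${}\minv$, turning each target equation into an equation over $\sigcr \union \set{{}\minv}$. Such equations can be established from $\eqnsmd$ by case analysis on whether the argument equals $0$, since Theorem~\ref{theorem-Md-F0} licenses the use of \emph{Sep} and \emph{Gil}. The conditional equations (4.1)--(4.6) of Lemma~\ref{lemma-defeqns-2} are exactly the case-wise facts required: $D_{\minv}$ follows from (4.3) when $x \neq 0$ and trivially from $0\minv = 0$ when $x = 0$; equation (3.2) is immediate from Lemma~\ref{lemma-defeqns-1} and \emph{Ril}; and (3.1), (3.3), (3.4) each split into a nonzero case handled by (4.1), (4.2), (4.4), \emph{Ref}, and \emph{Gil}, and a zero case handled by (4.5) and (4.6). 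This half is thus routine once Lemmas~\ref{lemma-defeqns-1} and~\ref{lemma-defeqns-2} are in hand.

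For the second derivability, $\eqnsnimdi \union \set{D_{\minv}} \vdash \eqnsmd \union \set{D_{\niminvi}}$, the situation is genuinely harder: eliminating ${}\minv$ via $D_{\minv}$ yields equations over $\signimdi$ for which no completeness result analogous to Theorem~\ref{theorem-Md-F0} is available, so everything must be derived by explicit equational manipulation from (3.1)--(3.4), with no recourse to case analysis. Two of the three targets are short. \emph{Ril} is immediate from Lemma~\ref{lemma-defeqns-1} and (3.2). For $D_{\niminvi}$, I substitute (3.1) into (3.3), which after ring rearrangement gives $x\niminvi - x \mmul (x\niminvi \mmul x\niminvi) = 1 - x \mmul x\niminvi$; recognising $x \mmul (x\niminvi \mmul x\niminvi)$ as $x\minv$ and applying Lemma~\ref{lemma-defeqns-1} yields $D_{\niminvi}$ at once.

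The remaining target, \emph{Ref}, is the main obstacle. Writing $e$ for $x \mmul x\niminvi$ and $f$ for $x\minv \mmul (x\minv)\niminvi$, I first intend to show that both are idempotent (from (3.2)) and in fact coincide: the instance of (3.4) at argument $x$ reads $(x\minv)\niminvi \mmul e = x$, and multiplying it by $x\minv$ gives $f \mmul e = e$, while the easily derived identity $x\minv \mmul e = x\minv$ gives $e \mmul f = f$, so commutativity forces $e = f$. Using this coincidence together with the instances of (3.1) and (3.3) at argument $x\minv$, I can compute $(x\minv)\niminvi = x + 1 - e$; comparing this with the value of $(x\minv)\niminvi$ supplied by $D_{\niminvi}$ (already established, applied to $x\minv$, and simplified by Lemma~\ref{lemma-defeqns-1}) collapses to $(x\minv)\minv = x$, which is \emph{Ref}. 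I expect the bookkeeping around the three nested inverses and the coincidence $e = f$ to be the only real difficulty; once it is in place, $\eqnsmd$ holds and both derivabilities are complete.
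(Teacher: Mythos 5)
Your proposal is correct, and for most of the theorem it coincides with the paper's proof: the direction $\eqnsmd \union \set{x\niminvi = x\minv + (1 - x \mmul x\minv)} \vdash \eqnsnimdi \union \set{x\minv = x \mmul (x\niminvi \mmul x\niminvi)}$ is handled exactly as in the paper, by the case split $x = 0$ versus $x \neq 0$ via Lemma~\ref{lemma-defeqns-2} (licensed by Theorem~\ref{theorem-Md-F0}), and in the converse direction your derivations of \emph{Ril} and of the defining equation of ${}\niminvi$ use the same ingredients the paper lists ((3.2) twice, resp.\ (3.1)--(3.3) plus the defining equation of ${}\minv$ and Lemma~\ref{lemma-defeqns-1}). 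The only genuine divergence is \emph{Ref}. Your route --- proving $x \mmul x\niminvi = x\minv \mmul (x\minv)\niminvi$ from the two absorption identities and commutativity, then computing $(x\minv)\niminvi = x + 1 - x \mmul x\niminvi$ from (3.1), (3.3) and (3.4) at argument $x\minv$, and finally comparing with the already-derived defining equation of ${}\niminvi$ instantiated at $x\minv$ --- is valid (I checked each step), but it is a detour. The paper instead obtains $(x\minv)\minv = x$ in one stroke: unfold $(x\minv)\minv$ by the defining equation of ${}\minv$ applied twice to get $x \mmul (x\niminvi \mmul x\niminvi) \mmul ((x\minv)\niminvi \mmul (x\minv)\niminvi)$, and regroup so that (3.4), in the form $(x\minv)\niminvi \mmul (x \mmul x\niminvi) = x$, can be applied twice. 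What your argument buys is some structural insight (the coincidence of the two idempotents $x \mmul x\niminvi$ and $x\minv \mmul (x\minv)\niminvi$, and an explicit closed form for $(x\minv)\niminvi$); what the paper's buys is brevity and the fact that \emph{Ref} needs only (3.4) and the defining equation of ${}\minv$, not (3.1) and (3.3).
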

\begin{proof}
By Theorem~\ref{theorem-Md-F0}, we may use the general inverse law 
(\emph{Gil}) to prove derivability from $\eqnsmd$.
Recall that the equation $0\minv = 0$ is derivable from $\eqnsmd$.
By Lemma~\ref{lemma-defeqns-1}, the equation 
$x \mmul x\niminvi = x \mmul x\minv\;$~(**) is derivable from $\eqnsmd$ 
and the defining equation of ${}\niminvi$.

The derivability of (3.1)--(3.4) and the defining equation of ${}\minv$ 
from (2.1)--(2.2) and the defining equation of ${}\niminvi$ is proved as 
follows:
\begin{itemize}
\item
if $x \neq 0$, then
(3.1) follows immediately from (4.2), (2.1), \emph{Gil}, and (**);
\\
if $x = 0$, then
(3.1) follows immediately from (4.6);
\item
(3.2) follows immediately from (**) and~(2.2);
\item
if $x \neq 0$, then
(3.3) follows immediately from (4.1), (4.2), (2.1), and~\emph{Gil};
\\
if $x = 0$, then
(3.3) follows immediately from (4.5) and~(4.6);
\item
if $x \neq 0$, then
(3.4) follows immediately from (4.4), (**), and~(2.2);
\\
if $x = 0$, then
(3.4) follows trivially;
\item
if $x \neq 0$, then
the defining equation of ${}\minv$ follows immediately from (4.3);
\\
if $x = 0$, then
the defining equation of ${}\minv$ follows immediately from 
$0\minv = 0$.
\end{itemize}
The derivability of (2.1)--(2.2) and the defining equation of 
${}\niminvi$ from (3.1)--(3.4) and the defining equation of ${}\minv$ is 
proved as follows:
\begin{itemize}
\item
(2.1) follows immediately from the defining equation of ${}\minv$ 
and~(3.4) (twice);
\item
(2.2) follows immediately from the defining equation of ${}\minv$ 
and~(3.2) (twice);
\item
the defining equation of ${}\niminvi$ follows immediately from the
the defining equation of ${}\minv$, (3.2), (3.3), and~(3.1).
\qed
\end{itemize}
\end{proof}

A \emph{non-trivial one-based non-involutive meadow} is a one-based 
non-involutive meadow that satisfies the separation axiom and 
a \emph{one-based non-involutive cancellation meadow} is a one-based 
non-involutive meadow that satisfies the cancellation axiom or, 
equivalently, $x \neq 0 \Implies x \mmul x\niminvi = 1$.

The following two lemmas will be used in the proof of a subsequent theorem.
\begin{lemma}
\label{lemma-CMd-NiCMd-1}
Let $\alpha$ be the mapping from the class of all meadows to the class 
of all one-based non-involutive meadows that maps each meadow $\cA$ to 
the restriction to $\signimdi$ of the unique expansion of $\cA$ for 
which $\eqnsmd \union \set{x\niminvi = x\minv + (1 - x \mmul x\minv)}$ 
holds.
Then:
\begin{enumerate}
\item 
$\alpha$ is a bijection;
\item 
the restriction of $\alpha$ to the class of all cancellation meadows is 
a bijection.
\end{enumerate}
\end{lemma}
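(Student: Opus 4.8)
The plan is to produce an explicit two-sided inverse for $\alpha$ out of the two directions of Theorem~\ref{theorem-defeqv-Md-NiMd1}, and then to observe that neither $\alpha$ nor its inverse disturbs the $\sigcr$-reduct, which immediately yields part~2.

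First I would record that $\alpha$ is well defined. For a meadow $\cA$, the expansion to $\sigmd \union \signimdi$ required in the statement is unique, since ${}\niminvi$ is pinned down pointwise by the explicit definition $a\niminvi = a\minv + (1 - a \mmul a\minv)$; and by Theorem~\ref{theorem-defeqv-Md-NiMd1} this expansion satisfies $\eqnsnimdi$, so its restriction to $\signimdi$ really is a one-based non-involutive meadow. Symmetrically, I would introduce a candidate inverse $\beta$ that sends each one-based non-involutive meadow $\cB$ to the restriction to $\sigmd$ of the unique expansion for which $\eqnsnimdi \union \set{x\minv = x \mmul (x\niminvi \mmul x\niminvi)}$ holds; here ${}\minv$ is pinned down by $a\minv = a \mmul (a\niminvi \mmul a\niminvi)$, and by the other direction of Theorem~\ref{theorem-defeqv-Md-NiMd1} the result is a meadow.

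Next I would verify that $\beta \circ \alpha$ and $\alpha \circ \beta$ are both the identity; since both maps leave the carrier and the $\sigcr$-operations untouched, only the inverse operation needs checking. Given a meadow $\cA$, consider its expansion $\cA^+$ to $\sigcr \union \set{{}\minv,{}\niminvi}$ in which ${}\niminvi$ is interpreted by its defining equation. Then $\cA^+$ models $\eqnsmd \union \set{x\niminvi = x\minv + (1 - x \mmul x\minv)}$, so by Theorem~\ref{theorem-defeqv-Md-NiMd1} it also satisfies $x\minv = x \mmul (x\niminvi \mmul x\niminvi)$. But $\beta(\alpha(\cA))$ is exactly the $\sigmd$-reduct of $\cA^+$ with its ${}\minv$ recomputed by the term $x \mmul (x\niminvi \mmul x\niminvi)$; since $\cA^+$ already satisfies that equation, the recomputed operation agrees pointwise with the original, whence $\beta(\alpha(\cA)) = \cA$. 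The identity $\alpha(\beta(\cB)) = \cB$ follows in the same way, now using that $x\niminvi = x\minv + (1 - x \mmul x\minv)$ is derivable from $\eqnsnimdi \union \set{x\minv = x \mmul (x\niminvi \mmul x\niminvi)}$. This establishes part~1.

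For part~2 I would note that the cancellation axiom $x \neq 0 \And x \mmul y = x \mmul z \Implies y = z$ is a formula over $\sigcr$ alone, mentioning neither ${}\minv$ nor ${}\niminvi$. Because $\alpha$ and $\beta$ preserve the $\sigcr$-reduct, a meadow $\cA$ satisfies the cancellation axiom if and only if $\alpha(\cA)$ does; hence $\alpha$ carries the cancellation meadows bijectively onto the one-based non-involutive cancellation meadows. The only real work is in part~1, and it is bookkeeping rather than mathematics: one must keep the two expansions and the two restrictions apart and invoke the correct direction of Theorem~\ref{theorem-defeqv-Md-NiMd1} at each round trip. What makes the round trips close is that the two explicit defining terms are mutually inverse as derivable consequences of the respective theories, which is precisely the content of definitional equivalence.
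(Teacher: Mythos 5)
Your proof is correct and, for part~1, follows essentially the same route as the paper: the paper likewise introduces the reverse mapping (there called $\alpha'$) built from the other direction of Theorem~\ref{theorem-defeqv-Md-NiMd1} and concludes that the two composites are identities; your version merely spells out why the round trips close, namely that the joint expansion already satisfies both defining equations. For part~2 you take a slightly different and in fact more elementary route: you observe that the cancellation axiom is a formula over $\sigcr$ alone and that $\alpha$ and $\beta$ leave the $\sigcr$-reduct untouched, so satisfaction of cancellation is preserved in both directions for free. The paper instead invokes Lemma~\ref{lemma-defeqns-1}, i.e.\ the identity $x \mmul x\niminvi = x \mmul x\minv$ in the joint expansion, which is what one needs if one prefers to work with the general-inverse-law formulation $x \neq 0 \Implies x \mmul x\niminvi = 1$ of the cancellation property; your observation makes that lemma unnecessary at this point. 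Both arguments are sound, and yours buys a small simplification at the cost of relying on the stated equivalence between the two formulations of the cancellation property.
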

\begin{proof}
Let $\alpha'$ be the mapping from the class of all one-based 
non-involutive meadows to the class of all meadows that maps each 
one-based non-involutive meadow $\cA'$ to 
the restriction to $\sigmd$ of the unique expansion of $\cA'$ for which  
$\eqnsnimdi \union \set{x\minv = x \mmul (x\niminvi \mmul x\niminvi)}$
holds.
Then $\alpha \circ \alpha'$ and $\alpha' \circ \alpha$ are identity
mappings by Theorem~\ref{theorem-defeqv-Md-NiMd1}.
Hence, $\alpha$ is a bijection.

By Lemma~\ref{lemma-defeqns-1}, for each meadow $\cA$, 
$x \mmul x\niminvi = x \mmul x\minv$ holds in the unique expansion of 
$\cA$ for which 
$\eqnsmd \union \set{x\niminvi = x\minv + (1 - x \mmul x\minv)}$ holds.
This implies that, for each meadow $\cA$, $\alpha(\cA)$ satisfies the 
cancellation axiom if $\cA$ satisfies it.
In other words, $\alpha$ maps each cancellation meadow to a one-based 
non-involutive cancellation meadow.
Similar remarks apply to the inverse of $\alpha$.
Hence, the restriction of $\alpha$ to the class of all cancellation 
meadows is a bijection.
\qed
\end{proof}
\begin{lemma}
\label{lemma-CMd-NiCMd-2}
Let $\epsilon$ be the mapping from the set of all equations between 
terms over $\signimdi$ to the set of all equations between terms over 
$\sigmd$ that is induced by the defining equation of ${}\niminvi$ and
let $\alpha$ be as in Lemma~\ref{lemma-CMd-NiCMd-1}.
Then:
\begin{enumerate}
\item 
for each equation $\phi$ between terms over $\signimdi$,
$\eqnsnimdi \vdash \phi$ iff $\eqnsmd \vdash \epsilon(\phi)$;
\item
for each meadow $\cA$ and equation $\phi$ between terms over 
$\signimdi$, 
$\alpha(\cA) \models \phi$ iff $\cA \models \epsilon(\phi)$.
\end{enumerate}
\end{lemma}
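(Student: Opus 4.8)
The plan is to prove the two parts in the order that lets the second rest on the first: I would first establish the semantic statement~(2) by a routine structural induction, and then derive the syntactic statement~(1) from~(2) by combining it with Birkhoff's completeness theorem and the surjectivity of $\alpha$ recorded in Lemma~\ref{lemma-CMd-NiCMd-1}(1). This avoids a separate proof-theoretic conservativity argument for the definitional extensions.

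For part~(2), I would first extend $\epsilon$ from equations to terms in the evident way: $\epsilon$ sends each term over $\signimdi$ to the term over $\sigmd$ obtained by replacing, from the inside out, every subterm of the form $t\niminvi$ by $\epsilon(t)\minv + (1 - \epsilon(t) \mmul \epsilon(t)\minv)$, and it acts on an equation componentwise. Writing $\cA^{+}$ for the unique expansion of $\cA$ to $\sigmd \union \signimdi$ for which the defining equation of ${}\niminvi$ holds, so that $\cA$ is its reduct to $\sigmd$ and $\alpha(\cA)$ is its reduct to $\signimdi$, the key claim is that, for every assignment $\sigma$ and every term $t$ over $\signimdi$, the value of $t$ in $\alpha(\cA)$ under $\sigma$ equals the value of $\epsilon(t)$ in $\cA$ under $\sigma$ (all three algebras sharing one carrier). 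I would prove this by induction on the structure of $t$. The cases of variables, the constants $0$ and $1$, and the operators from $\sigcr$ are immediate, since $\epsilon$ fixes these symbols and $\cA$, $\cA^{+}$, and $\alpha(\cA)$ interpret them alike. The only case requiring care is $t = s\niminvi$: here I would use that ${}\niminvi$ is interpreted in $\alpha(\cA)$ exactly as in $\cA^{+}$, that $\cA^{+}$ satisfies the defining equation of ${}\niminvi$, and that the induction hypothesis identifies the value of $s$ in $\alpha(\cA)$ with the value of $\epsilon(s)$ in $\cA$; reading off the right-hand side of the defining equation then yields precisely the value of $\epsilon(s\niminvi)$. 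Applying this term-level identity to both sides of $\phi$ gives $\alpha(\cA) \models \phi$ iff $\cA \models \epsilon(\phi)$, which is part~(2).

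For part~(1) I would argue through a chain of equivalences. First, $\eqnsnimdi \vdash \phi$ holds iff, by Birkhoff's completeness theorem applied to the variety $\eqnsnimdi$, $\phi$ holds in every one-based non-involutive meadow. Since by Lemma~\ref{lemma-CMd-NiCMd-1}(1) the map $\alpha$ sends the class of all meadows \emph{onto} the class of all one-based non-involutive meadows, this is equivalent to $\alpha(\cA) \models \phi$ for every meadow $\cA$. By part~(2) the latter is equivalent to $\cA \models \epsilon(\phi)$ for every meadow $\cA$; here it matters that $\epsilon(\phi)$ is an equation over $\sigmd$, so that evaluating it in an arbitrary meadow is meaningful. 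Finally, again by Birkhoff's completeness theorem, now applied to the variety $\eqnsmd$, this holds iff $\eqnsmd \vdash \epsilon(\phi)$, as required.

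I expect the main obstacle to be conceptual rather than computational: the work is in organizing the bridge so that part~(1) falls out of part~(2) via surjectivity of $\alpha$ together with completeness, rather than in any one calculation. The single step demanding genuine care is the ${}\niminvi$-case of the induction in part~(2), where the three reducts $\cA$, $\cA^{+}$, and $\alpha(\cA)$ must be kept apart and their agreement on the shared operators invoked explicitly; everything else is bookkeeping.
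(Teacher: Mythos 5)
Your proof is correct, but it reaches part~(1) by a genuinely different route than the paper. For part~(2) you and the paper do essentially the same thing: the paper disposes of it in one line (``$\cA' \models \phi$ iff $\alpha'(\cA') \models \epsilon(\phi)$ by the construction of $\alpha'(\cA')$'', plus $\alpha'(\alpha(\cA)) = \cA$), and your structural induction on terms, keeping $\cA$, $\cA^{+}$ and $\alpha(\cA)$ apart, is just that one line carried out in full detail. The real divergence is in part~(1). The paper argues proof-theoretically: it introduces the reverse translation $\epsilon'$ induced by the defining equation of ${}\minv$, invokes Theorem~\ref{theorem-defeqv-Md-NiMd1} to get that derivability is preserved in both directions and that $\eqnsnimdi \vdash \epsilon'(\epsilon(\phi)) \Iff \phi$, and composes these three facts; no models appear. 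You instead go model-theoretic: Birkhoff completeness for $\eqnsnimdi$, surjectivity of $\alpha$ from Lemma~\ref{lemma-CMd-NiCMd-1}(1) to pass from ``all one-based non-involutive meadows'' to ``all $\alpha(\cA)$'', part~(2) to translate, and Birkhoff completeness for $\eqnsmd$ to come back to derivability. Both arguments ultimately rest on the definitional equivalence (your Lemma~\ref{lemma-CMd-NiCMd-1}(1) is itself proved from Theorem~\ref{theorem-defeqv-Md-NiMd1}), but yours buys the convenience of never having to reason about the composite translation $\epsilon' \circ \epsilon$ being provably the identity, at the cost of invoking completeness twice; the paper's version is purely syntactic and so would survive in a setting where one cares about the proof calculus itself rather than semantic consequence. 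Your chain of equivalences is sound as stated --- in particular you correctly identify surjectivity of $\alpha$ as the property needed, and you correctly note that $\epsilon(\phi)$ is a $\sigmd$-equation so that quantifying it over all meadows makes sense.
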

\begin{proof}
Let $\epsilon'$ be the mapping from the set of all equations between 
terms over $\sigmd$ to the set of all equations between terms over 
$\signimdi$ that is induced by the defining equation of ${}\minv$.
Then, by Theorem~\ref{theorem-defeqv-Md-NiMd1}:
\begin{itemize}
\item
for each equation $\phi$ between terms over $\signimdi$,
$\eqnsmd \vdash \epsilon(\phi)$ if $\eqnsnimdi \vdash \phi$;
\item
for each equation $\phi'$ between terms over $\sigmd$,
$\eqnsnimdi \vdash \epsilon'(\phi')$ if $\eqnsmd \vdash \phi'$;
\item
for each equation $\phi$ between terms over $\signimdi$,
$\eqnsnimdi \vdash \epsilon'(\epsilon(\phi)) \Iff \phi$.
\end{itemize}
From this it follows immediately that, for each equation $\phi$ between 
terms over $\signimdi$,
$\eqnsnimdi \vdash \phi$ iff $\eqnsmd \vdash \epsilon(\phi)$.

Let $\alpha'$ be as in the proof of Lemma~\ref{lemma-CMd-NiCMd-1}.
Then for each one-based non-involutive meadow $\cA'$ and equation $\phi$ 
between terms over $\signimdi$, 
$\cA' \models \phi$ iff $\alpha'(\cA') \models \epsilon(\phi)$ by the 
construction of $\alpha'(\cA')$.
From this and the fact that $\alpha'(\alpha(\cA)) = \cA$ for each meadow 
$\cA$, it follows that, for each meadow $\cA$ and equation $\phi$ 
between terms over $\signimdi$, 
$\alpha(\cA) \models \phi$ iff $\cA \models \epsilon(\phi)$.
\qed
\end{proof}

Recall that a totalized field is a total algebra over the signature 
$\sigmd$ that satisfies the equations $\eqnscr$, the separation axiom, 
and the general inverse law.
A \emph{one-totalized field} is a totalized field that satisfies in 
addition the equation $0\minv = 1$.
\begin{proposition}
\label{prop-origin-eqnsniinvi}
After replacing all occurrences of the operator $\niminvi$ with $\minv$, 
the equations $\eqnsniinvi$ are derivable from the axiomatization of 
one-totalized fields given above.
\end{proposition}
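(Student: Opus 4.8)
The plan is to work in classical first-order logic with the one-totalized field axioms---the commutative ring axioms $\eqnscr$, the separation axiom (\emph{Sep}) $0 \neq 1$, the general inverse law (\emph{Gil}) $x \neq 0 \Implies x \mmul x\minv = 1$, and the equation $0\minv = 1$---and to derive each of the four equations obtained from (3.1)--(3.4) by replacing $\niminvi$ with $\minv$ through a case distinction on whether $x$ equals $0$. Since \emph{Gil} and $0\minv = 1$ between them fix the value of $\minv$ on nonzero and on zero arguments respectively, this case distinction fully determines the behaviour of $\minv$ in each branch, which is exactly what makes the argument go through. In contrast to Theorem~\ref{theorem-defeqv-Md-NiMd1}, \emph{Gil} is available here directly, since it is part of the axiomatization rather than something licensed by Theorem~\ref{theorem-Md-F0}.

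First I would assemble a few auxiliary facts. From \emph{Sep} we have $1 \neq 0$, so \emph{Gil} gives $1 \mmul 1\minv = 1$ and hence $1\minv = 1$. Next, \emph{Gil} yields cancellation of nonzero factors: from $a \neq 0$ and $a \mmul b = a \mmul c$, multiplying by $a\minv$ and using $a \mmul a\minv = 1$ gives $b = c$. With cancellation in hand I would derive $x \neq 0 \Implies x\minv \neq 0$ (were $x\minv = 0$ we would get $x \mmul x\minv = 0 \neq 1$, contradicting \emph{Gil} and \emph{Sep}) and then $x \neq 0 \Implies (x\minv)\minv = x$ (from $x\minv \mmul (x\minv)\minv = 1 = x\minv \mmul x$ by \emph{Gil} and commutativity, cancelling the nonzero factor $x\minv$).

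With these facts the four derivations become short rewrites. For each equation I would split on $x \neq 0$ versus $x = 0$. When $x \neq 0$, I repeatedly replace $x \mmul x\minv$ by $1$ using \emph{Gil} and $(x\minv)\minv$ by $x$ using the auxiliary fact: for (3.1) the right-hand side collapses to $x$ while the left-hand side is $(x\minv)\minv = x$; for (3.2) the left-hand side is $x \mmul 1 = x$; for (3.3) it is \emph{Gil} applied to the nonzero element $x\minv$; and for (3.4) the inner term $x \mmul (x\minv \mmul x\minv)$ simplifies to $x\minv$, so the left-hand side becomes $x \mmul (x \mmul x\minv) = x$. When $x = 0$, I substitute $0\minv = 1$ and $1\minv = 1$ and reduce by the ring axioms: both sides of (3.1) become $1$, both sides of (3.2) and (3.4) become $0$, and (3.3) reduces to $1 \mmul 1 = 1$.

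I expect no genuine obstacle. The argument is the one-totalized analogue of the case analysis already used in Theorem~\ref{theorem-defeqv-Md-NiMd1}, and the only step that is more than a one-line rewrite is the auxiliary fact $(x\minv)\minv = x$ for $x \neq 0$, which rests on cancellation. What deserves emphasis is merely that the derivation is genuinely first-order rather than equational: it relies on splitting on $x = 0$ versus $x \neq 0$, which is legitimate precisely because \emph{Sep} and \emph{Gil} are themselves non-equational, so the four equations are obtained as first-order consequences of the one-totalized field axioms rather than by equational rewriting.
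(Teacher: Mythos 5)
Your proposal is correct and follows essentially the same route as the paper: establish $x \neq 0 \Implies x\minv \neq 0$ and hence $x \neq 0 \Implies (x\minv)\minv = x$ from \emph{Gil} and \emph{Sep}, then verify each of (3.1)--(3.4) by a case split on $x = 0$ versus $x \neq 0$, using $0\minv = 1$ in the zero case. The extra detail you supply (explicit cancellation, $1\minv = 1$) is consistent with, and merely fills in, the paper's argument.
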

\begin{proof}
It follows from the general inverse law and the separation axiom that 
$x \neq 0 \Implies x\minv \neq 0$.
It follows from this and the general inverse law that 
$x \neq 0 \Implies (x\minv)\minv = x\;$~($\dagger$).

The derivability of (3.1)--(3.4) from $\eqnscr$, the separation axiom, 
the general inverse law, and $0\minv = 1$ is proved as follows:
\begin{itemize}
\item
if $x \neq 0$, then (3.1) follows immediately from ($\dagger$) and
\emph{Gil};
\\
if $x = 0$, then (3.1) follows immediately from $0\minv = 1$ and
\emph{Gil};
\item
if $x \neq 0$, then (3.2) follows immediately from \emph{Gil};
\\
if $x = 0$, then (3.2) follow trivially;
\item
if $x \neq 0$, then (3.3) follows immediately from ($\dagger$) and 
\emph{Gil};
\\
if $x = 0$, then (3.3) follows immediately from $0\minv = 1$ and
\emph{Gil};
\item
if $x \neq 0$, then (3.4) follows immediately from \emph{Gil} and 
($\dagger$);
\\
if $x = 0$, then (3.4) follows trivially.
\qed
\end{itemize}
\end{proof}
The following is a corollary of 
Proposition~\ref{prop-origin-eqnsniinvi}.
\begin{corollary}
\label{corollary-cNiMd1-F1}
Up to naming of the multiplicative inverse operation, the class of 
all non-trivial one-based non-involutive cancellation meadows and the 
class of all one-totalized fields are the same.
\end{corollary}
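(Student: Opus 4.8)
The plan is to obtain the corollary from Proposition~\ref{prop-origin-eqnsniinvi} and Proposition~\ref{prop-derivable-eqn-niminvi-0}, in exactly the way Corollary~\ref{corollary-cMd-F0} is obtained from Proposition~\ref{prop-origin-eqnsinv}. The clause ``up to naming of the multiplicative inverse operation'' I would read as the bijection between total algebras over $\sigmd$ and total algebras over $\signimdi$ that leaves the $\sigcr$-reduct untouched and reinterprets the symbol $\minv$ as the symbol $\niminvi$; call an algebra and its image under this correspondence \emph{renamings} of each other. It then suffices to show that this bijection restricts to a correspondence between the class of one-totalized fields and the class of non-trivial one-based non-involutive cancellation meadows, which I would do by checking that a renaming lands in the second class exactly when the original lies in the first.

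First I would treat the direction from one-totalized fields. Let $\cA$ be a one-totalized field, i.e.\ a total algebra over $\sigmd$ satisfying $\eqnscr$, the separation axiom, the general inverse law, and $0\minv = 1$, and let $\cA'$ be its renaming over $\signimdi$. Since the $\sigcr$-reduct is unchanged, $\cA'$ satisfies $\eqnscr$. By Proposition~\ref{prop-origin-eqnsniinvi} the equations $\eqnsniinvi$ with $\niminvi$ rewritten to $\minv$ are derivable from the one-totalized-field axioms, hence hold in $\cA$; rewriting back along the renaming, $\cA'$ satisfies $\eqnsniinvi$, so $\cA'$ is a one-based non-involutive meadow. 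The separation axiom transfers verbatim, and the general inverse law $x \neq 0 \Implies x \mmul x\minv = 1$ of $\cA$ becomes, under the renaming, the condition $x \neq 0 \Implies x \mmul x\niminvi = 1$ that defines a one-based non-involutive cancellation meadow. Thus $\cA'$ is a non-trivial one-based non-involutive cancellation meadow.

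For the converse I would keep the same convention, now with $\cA$ a non-trivial one-based non-involutive cancellation meadow over $\signimdi$ and $\cA'$ its renaming over $\sigmd$. As before $\cA'$ satisfies $\eqnscr$ and the separation axiom, and the defining condition $x \neq 0 \Implies x \mmul x\niminvi = 1$ becomes the general inverse law. The only remaining one-totalized-field axiom is $0\minv = 1$, and here I would invoke Proposition~\ref{prop-derivable-eqn-niminvi-0}: it gives $0\niminvi = 1$ in every one-based non-involutive meadow, so $\cA$ satisfies $0\niminvi = 1$ and $\cA'$ therefore satisfies $0\minv = 1$. Hence $\cA'$ is a one-totalized field, and since the renaming is a bijection carrying each class onto the other, the corollary follows.

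I do not anticipate a genuine obstacle, since all the mathematical work is already carried by the two cited propositions and the task reduces to transport along the renaming. The only points needing attention are bookkeeping ones: applying Proposition~\ref{prop-origin-eqnsniinvi} in the direction in which it is stated (it derives the $\minv$-rewritten $\eqnsniinvi$ from the field axioms, which is precisely what the forward inclusion consumes), and using the cancellation axiom in its $x \mmul x\niminvi = 1$ form, so that it matches the general inverse law under renaming rather than being confused with the multiplicative cancellation rule.
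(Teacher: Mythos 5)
Your proof is correct and matches the paper's (implicit) argument: the paper states this result as a direct corollary of Proposition~\ref{prop-origin-eqnsniinvi} without writing out a proof, and the transport along the renaming together with Proposition~\ref{prop-derivable-eqn-niminvi-0} for the converse inclusion is exactly the intended verification. The only reading convention you rely on --- that the cancellation axiom for one-based non-involutive meadows is equivalent to $x \neq 0 \Implies x \mmul x\niminvi = 1$ --- is asserted by the paper itself in its definition, so nothing is missing.
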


Not all non-trivial one-based non-involutive meadows are one-totalized 
fields, e.g.\ the initial one-based non-involutive meadow is not a 
one-totalized field.
Nevertheless, we have the following theorem.
\begin{theorem}
\label{theorem-NiMd1-F1}
Up to naming of the multiplicative inverse operation, the equational 
theory of one-based non-involutive meadows and the equational theory of 
one-totalized fields are the same.
\end{theorem}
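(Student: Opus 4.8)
The plan is to reduce the claim to the already-established meadow version, Theorem~\ref{theorem-Md-F0}, by transporting equations along the definitional equivalence of Theorem~\ref{theorem-defeqv-Md-NiMd1}, as packaged for equations and for algebras in Lemmas~\ref{lemma-CMd-NiCMd-1} and~\ref{lemma-CMd-NiCMd-2}. For an equation $\phi$ between terms over $\signimdi$, write $\phi^*$ for the equation between terms over $\sigmd$ obtained by renaming ${}\niminvi$ to ${}\minv$; since $\sigmd$ and $\signimdi$ differ only in this operator symbol, the theorem amounts to the assertion that $\phi$ holds in all one-based non-involutive meadows iff $\phi^*$ holds in all one-totalized fields. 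The proof I have in mind is a single chain of equivalences.

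On the left end I can exploit that one-based non-involutive meadows and meadows are varieties. By Birkhoff's theorem (completeness of equational logic), $\phi$ holds in all one-based non-involutive meadows iff $\eqnsnimdi \vdash \phi$; by Lemma~\ref{lemma-CMd-NiCMd-2}(1) this is equivalent to $\eqnsmd \vdash \epsilon(\phi)$, and hence, by Birkhoff's theorem again, to $\epsilon(\phi)$ holding in all meadows. By Theorem~\ref{theorem-Md-F0} together with Corollary~\ref{corollary-cMd-F0}, the last condition is in turn equivalent to $\epsilon(\phi)$ holding in all non-trivial cancellation meadows. Note that one-totalized fields are not an equational class, so Birkhoff cannot be applied to them directly; I therefore reach them only semantically, through the cancellation subclasses.

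The heart of the argument is the connecting step on the right end. By Lemma~\ref{lemma-CMd-NiCMd-1}(2), the map $\alpha$ restricts to a bijection from cancellation meadows to one-based non-involutive cancellation meadows; because $\alpha$ leaves the $\sigcr$-reduct unchanged, it preserves and reflects the separation axiom $0 \neq 1$, so it restricts further to a bijection between the \emph{non-trivial} cancellation meadows on either side. For each non-trivial cancellation meadow $\cA$, Lemma~\ref{lemma-CMd-NiCMd-2}(2) gives $\alpha(\cA) \models \phi$ iff $\cA \models \epsilon(\phi)$. Quantifying over all such $\cA$ and using that the restricted $\alpha$ is onto, I obtain that $\epsilon(\phi)$ holds in all non-trivial cancellation meadows iff $\phi$ holds in all non-trivial one-based non-involutive cancellation meadows. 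Finally, Corollary~\ref{corollary-cNiMd1-F1} identifies the latter class, up to the renaming ${}\niminvi \mapsto {}\minv$, with the class of all one-totalized fields, i.e.\ $\phi$ holds in all non-trivial one-based non-involutive cancellation meadows iff $\phi^*$ holds in all one-totalized fields. Composing the equivalences yields the theorem.

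I expect this connecting step to be the main obstacle: one must check carefully that $\alpha$ genuinely restricts to a bijection on the non-trivial cancellation subclasses and that its surjectivity is precisely what upgrades the pointwise equivalence between $\alpha(\cA) \models \phi$ and $\cA \models \epsilon(\phi)$ to the two universally quantified validity statements, so that no one-totalized field is missed. A cleaner but equivalent alternative for this step would bypass the abstract bijection by a direct computation: on any zero-totalized field the operation defined by $x\niminvi = x\minv + (1 - x \mmul x\minv)$ equals $x\minv$ when $x \neq 0$ and sends $0$ to $1$, hence is exactly the one-totalized inverse, so $\alpha$ carries each zero-totalized field to the corresponding one-totalized field; I would adopt whichever formulation reads more transparently. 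Everything else in the chain is a routine application of Birkhoff's theorem together with the translation lemmas.
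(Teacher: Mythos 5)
Your proposal is correct and follows essentially the same route as the paper's proof: both reduce the claim to Theorem~\ref{theorem-Md-F0} by transporting equations along the definitional equivalence via Lemmas~\ref{lemma-CMd-NiCMd-1}.2 and~\ref{lemma-CMd-NiCMd-2}, and then identify the non-trivial cancellation subclass with the one-totalized fields via Corollaries~\ref{corollary-cMd-F0} and~\ref{corollary-cNiMd1-F1}. The only differences are presentational --- you phrase the argument as a chain of equivalences with Birkhoff completeness made explicit, where the paper establishes the one non-trivial implication and concludes that the two equational theories coincide.
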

\begin{proof}
Let $\epsilon$ be as in Lemma~\ref{lemma-CMd-NiCMd-2}.
By Lemmas~\ref{lemma-CMd-NiCMd-1}.2 and~\ref{lemma-CMd-NiCMd-2}.2, we 
have that, for each equation $\phi$ between terms over $\signimdi$,
$\phi$ holds in all one-based non-involutive cancellation meadows only 
if $\epsilon(\phi)$ holds in all cancellation meadows.
From this, Theorem~\ref{theorem-Md-F0} and 
Corollary~\ref{corollary-cMd-F0} it follows that, for each equation 
$\phi$ between terms over $\signimdi$, $\phi$ holds in all one-based 
non-involutive cancellation meadows only if $\epsilon(\phi)$ is 
derivable from $\eqnsmd$.
From this and Lemma~\ref{lemma-CMd-NiCMd-2}.1 it follows that, for each 
equation $\phi$ between terms over $\signimdi$, $\phi$ holds in all 
one-based non-involutive cancellation meadows only if $\phi$ is 
derivable from $\eqnsnimdi$.
Hence, the equational theory of one-based non-involutive meadows and the 
equational theory of one-based non-involutive cancellation meadows are 
the same.
From this and Corollary~\ref{corollary-cNiMd1-F1} it follows that the 
equational theory of one-based non-involutive meadows and the 
equational theory of one-totalized fields are the same.
\qed
\end{proof}
Theorem~\ref{theorem-NiMd1-F1} can be read as follows: $\eqnsnimdi$ is a 
finite basis for the equational theory of one-based non-involutive 
cancellation meadows.

As a consequence of Theorem~\ref{theorem-NiMd1-F1}, the separation axiom 
and the cancellation axiom may be used to show that an equation is 
derivable from the equations~$\eqnsnimdi$.

\begin{proposition}
\label{prop-NiMd1-derivable}
The equations
\begin{ldispl}
(- x)\niminvi =
 - (x\niminvi) \mmul (x \mmul x\niminvi) + (1 - x \mmul x\niminvi)\;, 
\\ 
(x \mmul y)\niminvi =
 (x\niminvi \mmul y\niminvi) \mmul 
 ((x \mmul x\niminvi) \mmul (y \mmul y\niminvi)) +
 (1 - (x \mmul x\niminvi) \mmul (y \mmul y\niminvi)) 
\end{ldispl}%
are derivable from the equations $\eqnsnimdi$.
\end{proposition}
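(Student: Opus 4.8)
The plan is to exploit the remark following Theorem~\ref{theorem-NiMd1-F1}: since $\eqnsnimdi$ is a finite basis for the equational theory of one-based non-involutive cancellation meadows, it suffices to show that both equations hold in every non-trivial one-based non-involutive cancellation meadow, and in such a structure I may freely use the separation axiom (\emph{Sep}) and the general inverse law $x \neq 0 \Implies x \mmul x\niminvi = 1$ (\emph{Gil}). By Corollary~\ref{corollary-cNiMd1-F1} these structures are, up to naming, exactly the one-totalized fields, so the reasoning is ordinary field reasoning with the single twist that $0\niminvi = 1$.

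The verification then proceeds by case analysis on whether the arguments are zero, the point being that the subterm $x \mmul x\niminvi$ on the right-hand sides acts as a guard: by \emph{Gil} it equals $1$ when $x \neq 0$ and, since $0\niminvi = 1$ by Proposition~\ref{prop-derivable-eqn-niminvi-0}, it equals $0$ when $x = 0$. For the first equation, if $x \neq 0$ then $-x \neq 0$ and cancellation applied to $(-x) \mmul (-(x\niminvi)) = x \mmul x\niminvi = 1$ gives $(-x)\niminvi = -(x\niminvi)$, while $x \mmul x\niminvi = 1$ collapses the right-hand side to $-(x\niminvi) \mmul 1 + (1 - 1) = -(x\niminvi)$; if $x = 0$ then both sides reduce to $1$, because $(-x)\niminvi = 0\niminvi = 1$ and $x \mmul x\niminvi = 0$ collapses the right-hand side to $(1 - 0) = 1$. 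For the second equation, if $x \neq 0$ and $y \neq 0$ then $x \mmul y \neq 0$ (by cancellation, $x \mmul y = 0$ with $x \neq 0$ would force $y = 0$), and cancellation applied to $(x \mmul y) \mmul (x\niminvi \mmul y\niminvi) = (x \mmul x\niminvi) \mmul (y \mmul y\niminvi) = 1$ yields $(x \mmul y)\niminvi = x\niminvi \mmul y\niminvi$, which matches the right-hand side since the guard $(x \mmul x\niminvi) \mmul (y \mmul y\niminvi)$ equals $1$; if $x = 0$ or $y = 0$ then $x \mmul y = 0$, so the left-hand side is $1$, and at least one guard factor is $0$, so the right-hand side likewise collapses to $1$.

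The bulk of the argument is thus routine, and I expect no serious obstacle. The only point requiring care is the bookkeeping of the guard subterms: one must check that $(x \mmul x\niminvi)$ and its complement $(1 - x \mmul x\niminvi)$ really do select the correct branch in both the $x \neq 0$ and $x = 0$ cases, and for the product equation that the two-variable guard $(x \mmul x\niminvi) \mmul (y \mmul y\niminvi)$ behaves as the conjunction $x \neq 0 \And y \neq 0$. An alternative, fully syntactic route avoids the completeness theorem altogether: apply the defining equation $x\niminvi = x\minv + (1 - x \mmul x\minv)$ at $-x$ and at $x \mmul y$, rewrite $(-x)\minv$ and $(x \mmul y)\minv$ using Proposition~\ref{prop-Md-derivable}, and then re-express the results over $\signimdi$ via $x\minv = x \mmul (x\niminvi \mmul x\niminvi)$ together with Lemma~\ref{lemma-defeqns-1}; this reduces each equation to an identity of commutative rings. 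I would present the semantic proof, as it is shorter and more transparent.
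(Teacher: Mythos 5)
Your proposal is correct and follows essentially the same route as the paper: invoke the consequence of Theorem~\ref{theorem-NiMd1-F1} that the separation and cancellation axioms may be used, split into the zero and non-zero cases, derive $(-x)\niminvi = -(x\niminvi)$ and $(x \mmul y)\niminvi = x\niminvi \mmul y\niminvi$ by cancellation in the non-zero case, and reduce both equations to $0\niminvi = 1$ otherwise. The only cosmetic difference is that the paper obtains the intermediate identities from~(3.2) plus cancellation rather than from the general inverse law directly, which is immaterial.
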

\begin{proof}
Recall that the equation $0\niminvi = 1$ is derivable from $\eqnsnimdi$.
The conditional equation $x \neq 0 \Implies x \mmul x\niminvi = 1$, 
which will be called $\mathit{Gil}'$ below, is a variant of the general 
inverse law derivable from~(3.2) and the cancellation axiom.
\begin{itemize}
\item
if $x \neq 0$, then $-x \mmul (-x \mmul (-x)\niminvi) = -x$ by~(3.2) and 
$-x \mmul (-x \mmul -(x\niminvi)) = -x$ by~(3.2), hence
$(- x)\niminvi = - (x\niminvi)$ by the cancellation axiom, hence 
$(- x)\niminvi =
 - (x\niminvi) \mmul (x \mmul x\niminvi) + (1 - x \mmul x\niminvi)$ by
$\mathit{Gil}'$;
\\
if $x = 0$, then the equation reduces to $0\niminvi = 1$;
\item
if $x \neq 0$ and $y \neq 0$, then 
$(x \mmul y) \mmul ((x \mmul y) \mmul (x \mmul y)\niminvi) = x \mmul y$
by~(3.2) and 
$(x \mmul y) \mmul ((x \mmul y) \mmul (x\niminvi \mmul y\niminvi)) =
 x \mmul y$ by~(3.2), 
hence $(x \mmul y)\niminvi = x\niminvi \mmul y\niminvi$ by the 
cancellation axiom, hence 
$(x \mmul y)\niminvi =
  (x\niminvi \mmul y\niminvi) \mmul 
  ((x \mmul x\niminvi) \mmul (y \mmul y\niminvi)) +
  (1 - (x \mmul x\niminvi) \mmul (y \mmul y\niminvi))$
by $\mathit{Gil}'$;
\\
if $x = 0$ or $y = 0$, then the equation reduces to $0\niminvi = 1$.
\qed
\end{itemize}
\end{proof}

The one-based non-involutive cancellation meadow that we are most 
interested in is $\Rato$, the one-totalized field of rational numbers.
$\Rato$ differs from the field of rational numbers only in that the
multiplicative inverse of zero is one.%
\begin{theorem}
\label{theorem-Rato}
$\Rato$ is the initial algebra among the total algebras over the 
signature $\signimdi$ that satisfy the equations
\begin{ldispl}
\eqnsnimdi \union 
\set{(1 + x^2 + y^2) \mmul (1 + x^2 + y^2)\niminvi = 1}\;.
\end{ldispl}%
\end{theorem}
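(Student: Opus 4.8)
The plan is to transport the analogous initiality result for $\Ratz$ (Theorem~\ref{theorem-Ratz}) across the definitional equivalence established in Theorem~\ref{theorem-defeqv-Md-NiMd1}, using the bijection $\alpha$ of Lemma~\ref{lemma-CMd-NiCMd-1} and the equation-translation machinery of Lemma~\ref{lemma-CMd-NiCMd-2}. The first observation is that the defining condition $(1 + x^2 + y^2) \mmul (1 + x^2 + y^2)\niminvi = 1$ used here and its $\minv$-counterpart $(1 + x^2 + y^2) \mmul (1 + x^2 + y^2)\minv = 1$ from Theorem~\ref{theorem-Ratz} correspond under $\epsilon$: since $1 + x^2 + y^2$ is provably nonzero in any non-trivial cancellation meadow and Lemma~\ref{lemma-defeqns-1} gives $x \mmul x\niminvi = x \mmul x\minv$, the two conditions are interderivable over the relevant equational theories. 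First I would verify this correspondence carefully, so that the augmented specifications on the two sides match up under $\alpha$ and $\epsilon$.

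Next I would argue that $\alpha$ restricts to a bijection between the class $\cC_0$ of total $\sigmd$-algebras satisfying $\eqnsmd \union \set{(1 + x^2 + y^2) \mmul (1 + x^2 + y^2)\minv = 1}$ and the class $\cC_1$ of total $\signimdi$-algebras satisfying $\eqnsnimdi \union \set{(1 + x^2 + y^2) \mmul (1 + x^2 + y^2)\niminvi = 1}$. This follows from Lemma~\ref{lemma-CMd-NiCMd-1}.1 once the extra defining equation on one side is shown to be preserved: by Lemma~\ref{lemma-CMd-NiCMd-2}.2 together with the $\epsilon$-correspondence of the previous paragraph, a meadow $\cA$ satisfies the $\minv$-condition exactly when $\alpha(\cA)$ satisfies the $\niminvi$-condition, so $\alpha$ maps $\cC_0$ bijectively onto $\cC_1$. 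Moreover $\alpha$ and its inverse $\alpha'$ are constructed by expanding with an explicitly definable operation and then restricting the signature, which is a functorial construction that preserves and reflects homomorphisms; hence $\alpha$ is (up to the naming of the inverse operator) an isomorphism of categories between $\cC_0$ and $\cC_1$.

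With that categorical equivalence in hand, the argument concludes structurally: $\Ratz$ is initial in $\cC_0$ by Theorem~\ref{theorem-Ratz}, and $\Rato = \alpha(\Ratz)$ since $\Rato$ differs from $\Ratz$ only in sending $0$ to $1$ under the inverse operation, which is precisely what the defining equation $x\niminvi = x\minv + (1 - x \mmul x\minv)$ prescribes at $0$ (and leaves everything else fixed, by Lemma~\ref{lemma-defeqns-1}). An isomorphism of categories carries the initial object to the initial object, so $\alpha(\Ratz) = \Rato$ is initial in $\cC_1$, which is exactly the claim. I would therefore first pin down $\Rato = \alpha(\Ratz)$ explicitly and then invoke preservation of initiality.

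The step I expect to be the main obstacle is the homomorphism-preservation claim in the second paragraph: showing not merely that $\alpha$ is a bijection on objects but that it transports the entire categorical structure, so that initiality is genuinely preserved. Bijectivity on objects alone does not guarantee that initial objects correspond; one must check that every $\signimdi$-homomorphism between algebras in $\cC_1$ arises, via the shared carrier and the explicit definability of $\niminvi$ from $\minv$ (and vice versa), from a $\sigmd$-homomorphism between the corresponding algebras in $\cC_0$, and conversely. The key technical point is that $\alpha$ and $\alpha'$ act as the identity on carriers, so a function between carriers respects the $\sigmd$-operations iff it respects the $\signimdi$-operations --- the latter being term-definable from the former and back. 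Making this precise is where the real content lies; once it is granted, the passage from Theorem~\ref{theorem-Ratz} to the present theorem is immediate.
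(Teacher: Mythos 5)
Your argument is sound, but it takes a genuinely different route from the paper's. The paper proves Theorem~\ref{theorem-Rato} by remarking that the proof of Theorem~9 of \cite{BM09g} (i.e.\ the direct initial-algebra argument behind Theorem~\ref{theorem-Ratz}) can be repeated with the one-totalized inverse in place of the zero-totalized one; it re-runs that construction rather than citing the earlier result. You instead transport Theorem~\ref{theorem-Ratz} across the definitional equivalence of Theorem~\ref{theorem-defeqv-Md-NiMd1}: you observe that, modulo $\eqnsmd$ and Lemma~\ref{lemma-defeqns-1}, $\epsilon$ carries the side condition $(1 + x^2 + y^2) \mmul (1 + x^2 + y^2)\niminvi = 1$ to an equation equivalent to $(1 + x^2 + y^2) \mmul (1 + x^2 + y^2)\minv = 1$, so that by Lemma~\ref{lemma-CMd-NiCMd-2}.2 the bijection $\alpha$ of Lemma~\ref{lemma-CMd-NiCMd-1} restricts to a bijection between the two augmented model classes; you identify $\alpha(\Ratz) = \Rato$ (correct: the defining equation of ${}\niminvi$ evaluates to $x\minv$ for $x \neq 0$ and to $1$ at $x = 0$ in $\Ratz$); and you conclude by transferring initiality. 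The one point you rightly single out --- that $\alpha$ must preserve and reflect homomorphisms, not merely be a bijection on objects --- does hold, for the standard reason that $\alpha$ and its inverse are the identity on carriers and each inverse operation is a term operation in the other signature, so homomorphisms are preserved in both directions because homomorphisms preserve term operations. Your route buys a real reduction: the theorem becomes a corollary of results already established in the paper instead of requiring the reader to re-inspect the proof in \cite{BM09g}, and it generalizes verbatim to Theorem~\ref{theorem-Ratn} via Theorem~\ref{theorem-defeqv-Md-NiMdn}; the paper's route avoids formalizing the transport of categorical structure at the cost of an appeal to an external argument.
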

\begin{proof}
The proof goes as for Theorem~9 from~\cite{BM09g}.
\qed
\end{proof}

\section{Non-involutive Meadows}
\label{sect-NiMd}

Recall that we coined the name non-involutive meadow for a variant of a
meadow in which the multiplicative inverse of zero is not zero.
Thus, in a non-involutive meadow, the multiplicative inverse of zero can
be anything.
In this section, we give a finite equational specification of the class
of all non-involutive mead\-ows.
Moreover, we present generalizations of the main results from 
Section~\ref{sect-NiMd1} to \linebreak[2] $n$-based non-involutive 
meadows.
Because these generalizations turn out to present no additional 
complications, for most proofs, the reader is only informed about the 
main differences with the corresponding proofs from 
Section~\ref{sect-NiMd1}.

The signature of non-involutive meadows consists of the constants and 
operators from the signature of commutative rings with a multiplicative 
identity element and in addition:
\begin{itemize}
\item
the unary \emph{totalized multiplicative inverse} operator ${}\niminv$.
\end{itemize}
We write:
\begin{ldispl}
\begin{array}{@{}l@{\;}c@{\;}l@{}}
\signimd & \mathrm{for} & \sigcr \union \set{{}\niminv}\;.
\end{array}
\end{ldispl}%
We use postfix notation for the unary operator~${}\niminv$.

A \emph{non-involutive meadow} is a total algebra over the signature 
$\signimd$ that satisfies the equations given in Tables~\ref{eqns-CR}
and~\ref{eqns-niminv}.%
\begin{table}[!t]
\caption{Additional axioms for a non-involutive meadow}
\label{eqns-niminv}
\begin{eqntbl}
\begin{eqncol}
{} \\[-3ex]
0\niminv \mmul (x\niminv)\niminv = 
0\niminv \mmul x + (1 - x \mmul x\niminv)                \qquad (5.1) \\
x \mmul (x \mmul x\niminv) = x                           \hfill (5.2) \\
x\niminv \mmul (x\niminv)\niminv = 1                     \hfill (5.3) \\
(x \mmul (x\niminv \mmul x\niminv))\niminv \mmul (x \mmul x\niminv) = x
                                                         \hfill (5.4)
\end{eqncol}
\end{eqntbl}
\end{table}
An \emph{$n$-based non-involutive meadow} is a non-involutive meadow 
that satisfies the equation $0\niminv = \ul{n}$.
We write:
\begin{ldispl}
\begin{array}{@{}l@{\;}c@{\;}l@{}}
\eqnsniinv    &
\multicolumn{2}{@{}l@{}}
 {\mathrm{for\; the\; set\; of\; all\; equations\; in\; Table\;
          \ref{eqns-niminv}}\;,}
\\
\eqnsnimd     & \mathrm{for} & \eqnscr \union \eqnsniinv\;,
\\
\eqnsnimdn{n} & \mathrm{for} & \eqnsnimd \union \set{0\niminv = \ul{n}}\;.
\end{array}
\end{ldispl}%
Apart from the different symbols used for the multiplicative inverse 
operation, equation~(5.1) is \emph{Ref} adapted to the arbitrary 
totalization of the multiplicative inverse operation and equation~(5.2) 
is simply \emph{Ril}.

Notice that $\eqnsnimdi$ and $\eqnsnimdn{1}$ are different sets of 
equations.
However, both $\eqnsnimdi$ and $\eqnsnimdn{1}$ equationally define the 
class of all one-based non-involutive meadows.
\begin{proposition}
$\eqnsnimdi$ and $\eqnsnimdn{1}$ are deductively equivalent, i.e.\
\label{prop-eqv-NiMd1-NiMd1}
\begin{ldispl}
\eqnsnimdi \vdash \eqnsnimdn{1} 
\quad \mathrm{and} \quad
\eqnsnimdn{1} \vdash \eqnsnimdi\;. 
\end{ldispl}
\end{proposition}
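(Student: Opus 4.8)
The plan is to exploit the fact that, once the operator $\niminvi$ is identified with $\niminv$, the two sets $\eqnsnimdi$ and $\eqnsnimdn{1}$ share the commutative-ring axioms $\eqnscr$ and differ only in their reflection axioms: equations (3.2)--(3.4) are literally equations (5.2)--(5.4), so these transfer between the two theories without any work. Hence the whole proposition reduces to relating equation (3.1) on the one side to the pair consisting of equation (5.1) and $0\niminv = \ul{1}$ on the other. The key observation is that (5.1) is nothing but (3.1) with both sides multiplied by $0\niminv$, so once we know $0\niminv = \ul{1}$ the two reflection axioms become interderivable.

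For the forward direction $\eqnsnimdi \vdash \eqnsnimdn{1}$, I would first invoke Proposition~\ref{prop-derivable-eqn-niminvi-0} to obtain $0\niminv = \ul{1}$ from $\eqnsnimdi$; this already yields the extra equation of $\eqnsnimdn{1}$. To derive (5.1), I would start from (3.1) and use $0\niminv = \ul{1}$ together with the fact that multiplication by $\ul{1}$ is the identity (since $\ul{1} = 1$ and $p \mmul 1 = p$) to rewrite $0\niminv \mmul (x\niminv)\niminv$ as $(x\niminv)\niminv$ and $0\niminv \mmul x$ as $x$; applying (3.1) then gives (5.1). The remaining members of $\eqnsnimdn{1}$, namely (5.2)--(5.4), are already present verbatim as (3.2)--(3.4).

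For the backward direction $\eqnsnimdn{1} \vdash \eqnsnimdi$, the equation $0\niminv = \ul{1}$ is available by hypothesis, so I would run exactly the same rewriting in reverse: substituting $0\niminv = \ul{1}$ into (5.1) collapses $0\niminv \mmul (x\niminv)\niminv$ to $(x\niminv)\niminv$ and $0\niminv \mmul x$ to $x$, turning (5.1) into (3.1); equations (3.2)--(3.4) are again inherited from (5.2)--(5.4). I do not expect any genuine obstacle here: the entire argument is bookkeeping once Proposition~\ref{prop-derivable-eqn-niminvi-0} supplies $0\niminv = \ul{1}$, and the only point demanding a line of care is the observation that (5.1) is exactly (3.1) scaled by $0\niminv$, which makes the factor $0\niminv$ disappear precisely when it equals $\ul{1}$.
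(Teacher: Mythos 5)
Your proposal is correct and follows essentially the same route as the paper: both reduce the claim to the pair $\{0\niminvi = 1,\ (3.1) \leftrightarrow (5.1)\}$, obtain $0\niminvi = 1$ from Proposition~\ref{prop-derivable-eqn-niminvi-0}, and observe that (3.2)--(3.4) coincide with (5.2)--(5.4). Your extra remark that (5.1) is (3.1) scaled by $0\niminv$, so that the factor collapses via $\ul{1} = 1$ and $x \mmul 1 = x$, just spells out what the paper compresses into ``follows immediately.''
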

\begin{proof}
To prove that $\eqnsnimdi \vdash \eqnsnimdn{1}$, it is sufficient to
prove that $0\niminvi = 1$ and~(5.1) are derivable from $\eqnsnimdi$.
By Proposition~\ref{prop-derivable-eqn-niminvi-0}, we have that
$\eqnsnimdi \vdash 0\niminvi = 1$.
From~(3.1) and $0\niminvi = 1$, (5.1) follows immediately.
To prove that $\eqnsnimdn{n} \vdash \eqnsnimdi$, it is sufficient to
prove that~(3.1) is derivable from $\eqnsnimdn{n}$.
From~(5.1) and $0\niminv = 1$, (3.1) follows immediately.
\qed
\end{proof}

A \emph{non-trivial} ($n$-\emph{based}) \emph{non-involutive meadow} is 
an ($n$-based) non-involutive meadow that satisfies the separation axiom 
and an ($n$-\emph{based}) \emph{non-involutive cancellation meadow} is 
an ($n$-based) non-involutive meadow that satisfies the cancellation 
axiom or, equivalently, $x \neq 0 \Implies x \mmul x\niminv = 1$.

Recall that a totalized field is a total algebra over the signature 
$\sigmd$ that satisfies the equations $\eqnscr$, the separation axiom, 
and the general inverse law.
A \emph{non-zero-totalized field} is a totalized field that satisfies 
the inequation $0\minv \neq 0$ and a \emph{$n$-totalized field} is a 
totalized field that satisfies the equation $0\minv = \ul{n}$.
\begin{proposition}
\label{prop-origin-eqnsniinv}
After replacing all occurrences of the operator $\niminv$ by $\minv$, 
the equations $\eqnsniinv$ are derivable from the axiomatization of 
non-zero-totalized fields given above.
\end{proposition}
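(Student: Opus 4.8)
The plan is to mimic the proof of Proposition~\ref{prop-origin-eqnsniinvi}, weakening the hypothesis from $0\minv = 1$ to $0\minv \neq 0$. First I would record, exactly as there, two auxiliary consequences of \emph{Gil} and \emph{Sep}. From $x \neq 0 \Implies x \mmul x\minv = 1$ together with the observation that $x\minv = 0$ would force $1 = x \mmul 0 = 0$ against \emph{Sep}, I get $x \neq 0 \Implies x\minv \neq 0$. Applying \emph{Gil} once more, now to $x\minv$, and computing $(x\minv)\minv = (x \mmul x\minv) \mmul (x\minv)\minv = x \mmul (x\minv \mmul (x\minv)\minv) = x \mmul 1 = x$, I obtain the relation $x \neq 0 \Implies (x\minv)\minv = x$, which I will call~($\dagger$).

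The only ingredient beyond \emph{Gil} and \emph{Sep} is the non-zero-totalized axiom $0\minv \neq 0$, and its sole role is to license a single application of \emph{Gil} to the constant term $0\minv$, giving $0\minv \mmul (0\minv)\minv = 1$. With these facts in hand I would verify each of (5.1)--(5.4), after replacing every $\niminv$ by $\minv$, by splitting on whether $x = 0$. For $x \neq 0$ the arguments are the routine ones: \emph{Gil} turns $x \mmul x\minv$ into $1$, so every correction term $1 - x \mmul x\minv$ vanishes, and~($\dagger$) turns $(x\minv)\minv$ into $x$, after which all four equations collapse to ring identities provable from $\eqnscr$.

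The $x = 0$ case is where the proof departs from that of Proposition~\ref{prop-origin-eqnsniinvi}. Equations (5.2) and (5.4) hold trivially: in (5.2) the outermost factor $x$ forces both sides to $0$, and in (5.4) the trailing factor $x \mmul x\minv = 0 \mmul 0\minv = 0$ forces the left-hand side to $0$. The interesting instances are (5.1) and (5.3) at $x = 0$, where both sides must equal $1$; this is exactly the point at which $0\minv \mmul (0\minv)\minv = 1$ is used. For (5.3) the left-hand side is literally that product, and for (5.1) the left-hand side is again $0\minv \mmul (0\minv)\minv = 1$, while the right-hand side reduces to $0\minv \mmul 0 + (1 - 0 \mmul 0\minv) = 0 + 1 = 1$.

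I do not expect a genuine obstacle here; the subtlety, and the reason Table~\ref{eqns-niminv} formulates reflection as~(5.1) so as to accommodate an arbitrary value of $0\niminv$ rather than pinning $0\niminv$ to a numeral, is that the argument must succeed for an \emph{unspecified} value of $0\minv$. Accordingly the case split is governed purely by the dichotomy $x \neq 0$ versus $x = 0$ (where additionally $0\minv \neq 0$), and equations (5.1) and (5.3) are shaped so that their $x = 0$ instances bottom out in the single consequence $0\minv \mmul (0\minv)\minv = 1$ of the non-zero-totalized axiom, rather than in any particular value of $0\minv$.
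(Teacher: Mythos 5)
Your proof is correct and follows essentially the same route as the paper's: the paper simply says the argument of Proposition~\ref{prop-origin-eqnsniinvi} carries over with $0\minv = 1$ replaced by $0\minv \neq 0$, which is precisely your observation that the only role of the non-zero-totalization axiom is to license one application of \emph{Gil} to $0\minv$, yielding $0\minv \mmul (0\minv)\minv = 1$ for the $x=0$ cases of (5.1) and (5.3). Your write-up is in fact more explicit than the paper's about why this single consequence suffices.
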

\begin{proof}
The proof goes as for Proposition~\ref{prop-origin-eqnsniinvi}, with
$0\minv = 1$ everywhere replaced by $0\minv \neq 0$.
\qed
\end{proof}
%

For each $n > 0$, $n$-based non-involutive meadows have a lot of 
properties in common with zero-based non-involutive meadows.
\begin{theorem}
\label{theorem-defeqv-Md-NiMdn}
For each $n > 0$, $\eqnsmd$ is definitionally equivalent to 
$\eqnsnimdn{n}$, i.e.
\begin{ldispl}
\eqnsmd \union 
\set{x\niminv = x\minv + \ul{n} \mmul (1 - x \mmul x\minv)}
 \vdash
\eqnsnimdn{n} \union \set{x\minv = x \mmul (x\niminv \mmul x\niminv)}
\\
\hfill \mathrm{and} \hfill \phantom{\,}
\\
\eqnsnimdn{n} \union \set{x\minv = x \mmul (x\niminv \mmul x\niminv)}
 \vdash
\eqnsmd \union 
\set{x\niminv = x\minv + \ul{n} \mmul (1 - x \mmul x\minv)}\;.
\end{ldispl}
\end{theorem}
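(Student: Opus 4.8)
The plan is to mirror the proof of Theorem~\ref{theorem-defeqv-Md-NiMd1} line for line, treating the factor $\ul{n}$ in the defining equation of $\niminv$ as a bookkeeping parameter that is annihilated whenever it multiplies $(1 - x \mmul x\minv)$ on the branch $x \neq 0$. First I would establish the $n$-indexed analogue of Lemma~\ref{lemma-defeqns-1}, namely $x \mmul x\niminv = x \mmul x\minv$, from each of the two combined theories. In the forward direction this is immediate: multiplying $x\niminv = x\minv + \ul{n} \mmul (1 - x \mmul x\minv)$ by $x$ and applying \emph{Ril}~(2.2) collapses the term $\ul{n} \mmul (x - x \mmul (x \mmul x\minv))$ to $\ul{n} \mmul 0 = 0$; the reverse direction is verbatim the corresponding half of Lemma~\ref{lemma-defeqns-1}, since the defining equation of $\minv$ is unchanged.

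Second, I would prove the $n$-indexed analogue of Lemma~\ref{lemma-defeqns-2}. Invoking Theorem~\ref{theorem-Md-F0} to bring in \emph{Gil} and \emph{Sep}, the conditional equations~(4.1)--(4.4) carry over unchanged, because for $x \neq 0$ \emph{Gil} forces $1 - x \mmul x\minv = 0$ and hence $x\niminv = x\minv$, so the $\ul{n}$ factor plays no role on the non-zero branch. The equations governing the $x = 0$ branch change only in their right-hand constants: one obtains $x = 0 \Implies x\niminv = \ul{n}$ in place of~(4.5), and correspondingly $x = 0 \Implies (x\niminv)\niminv = \ul{n}\niminv$ in place of~(4.6), with $0\niminv = \ul{n}$ itself falling out of the defining equation together with $0\minv = 0$.

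With these lemmas in hand I would run the same two-block case analysis. In the forward block I would derive (5.1)--(5.4) and the defining equation of $\minv$ from $\eqnsmd$ together with the defining equation of $\niminv$; on the branch $x \neq 0$ every line is the $n=1$ argument verbatim via $x\niminv = x\minv$ and \emph{Gil}. In the reverse block I would recover~(2.1),~(2.2) and the defining equation of $\niminv$ from $\eqnsnimdn{n}$ together with the defining equation of $\minv$; here~(5.2)--(5.4) coincide symbol-for-symbol with~(3.2)--(3.4), so the recovery of~(2.1) and~(2.2) is identical to Theorem~\ref{theorem-defeqv-Md-NiMd1}, the only genuinely new ingredient being that reinstating the defining equation of $\niminv$ now uses the adapted reflection equation~(5.1) together with $0\niminv = \ul{n}$ to reproduce the leading $\ul{n}$.

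The hard part will be the branch $x = 0$, and specifically the interaction between $0\niminv = \ul{n}$ and equations~(5.1) and~(5.3). Substituting $x = 0$ into either equation reduces it to the demand that $0\niminv$ be genuinely invertible, i.e.\ $0\niminv \mmul (0\niminv)\niminv = 1$, which after the first lemma and repeated use of \emph{Ril} amounts to $\ul{n} \mmul \ul{n}\minv = 1$. This is the one place where the generalization is not merely cosmetic: for $n = 1$ it is trivial, since $\ul{1} = 1$ and $1\minv = 1$, but for larger $n$ it hinges on the invertibility of $\ul{n}$. I would therefore verify this step with the greatest care, and it is exactly here that the chosen shape of~(5.1) --- carrying a leading factor $0\niminv$ on both sides so as to blunt the contribution of $(x\niminv)\niminv$ when $x\niminv$ is itself a totalized inverse --- is doing the real work; I would treat the correct handling of $0\niminv$ on this branch as the crux of the whole argument.
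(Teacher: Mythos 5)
Your overall route is the one the paper itself takes---its proof of Theorem~\ref{theorem-defeqv-Md-NiMdn} consists of the single remark that it ``goes essentially as for Theorem~\ref{theorem-defeqv-Md-NiMd1}'' with differences only in the case $x = 0$---and you have correctly located where those differences bite. But the step you set aside for ``verification with the greatest care'' is not a step that can be verified: it fails. In the forward direction you must derive~(5.3), whose instance at $x = 0$ reads $0\niminv \mmul (0\niminv)\niminv = 1$; using $0\niminv = \ul{n}$ and your analogue of Lemma~\ref{lemma-defeqns-1} this is exactly the demand $\ul{n} \mmul \ul{n}\minv = 1$, and that equation is not derivable from $\eqnsmd$ when $n \geq 2$: the zero-totalized prime field of characteristic $p$, for any prime $p$ dividing $n$, is a meadow in which $\ul{n} = 0$ and hence $\ul{n} \mmul \ul{n}\minv = 0 \neq 1$. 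By soundness, the first entailment claimed in the theorem is therefore false for every $n \geq 2$. Your suggestion that the leading factor $0\niminv$ in~(5.1) ``blunts'' the contribution of $(x\niminv)\niminv$ does not help: substituting $x = 0$ into~(5.1) collapses its right-hand side to $0\niminv \mmul 0 + (1 - 0) = 1$, so~(5.1) at $x = 0$ is the very same problematic identity as~(5.3) at $x = 0$.

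The reverse direction exposes the same obstruction from the other side: from~(5.3) at $x = 0$ and $0\niminv = \ul{n}$ one gets $\ul{n} \mmul \ul{n}\niminv = 1$, so every $n$-based non-involutive meadow makes $\ul{n}$ invertible, whereas not every meadow does; consequently no bijective correspondence of the kind underlying Lemma~\ref{lemma-CMd-NiCMd-1} can exist between the two unrestricted classes, and the claimed definitional equivalence cannot hold for $n \geq 2$. The statement is correct for $n = 1$, where $\ul{1}\minv = 1$ makes the critical identity trivial (this is Theorem~\ref{theorem-defeqv-Md-NiMd1}), and it could be repaired for general $n$ only by strengthening $\eqnsmd$ with something like $\ul{n} \mmul \ul{n}\minv = 1$. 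So the gap you flagged is genuine, and it lies in the theorem as stated---the paper's one-sentence proof sketch glosses over exactly this point---but a complete proof attempt has to confront it rather than defer it.
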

\begin{proof}
The proof goes essentially as for Theorem~\ref{theorem-defeqv-Md-NiMd1}.
The proof of the derivability of (5.1)--(5.4) and the defining equation 
of ${}\minv$ from (2.1)--(2.2) and the defining equation of ${}\niminv$ 
goes slightly different for each of these equations in the case $x = 0$.
\qed
\end{proof}

Not all non-trivial $n$-based non-involutive meadows are $n$-totalized 
fields, e.g.\ the initial $n$-based non-involutive meadow is not a 
$n$-totalized field.
Nevertheless, we have the following theorem.
\begin{theorem}
\label{theorem-NiMdn-Fn}
For each $n > 0$, up to naming of the multiplicative inverse operation, 
the equational theory of $n$-based non-involutive meadows and the 
equational theory of $n$-totalized fields are the same.
\end{theorem}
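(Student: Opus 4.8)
The plan is to transcribe the proof of Theorem~\ref{theorem-NiMd1-F1} line for line, substituting each one-based ingredient by its $n$-based counterpart and using as backbone the definitional equivalence already established in Theorem~\ref{theorem-defeqv-Md-NiMdn}. As in Section~\ref{sect-NiMd1}, the argument splits into two reductions: first, that the equational theory of $n$-based non-involutive meadows coincides with the equational theory of $n$-based non-involutive cancellation meadows; and second, that the latter coincides, up to naming of the multiplicative inverse operation, with the equational theory of $n$-totalized fields.

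For the first reduction I would prove the $n$-based analogues of Lemmas~\ref{lemma-CMd-NiCMd-1} and~\ref{lemma-CMd-NiCMd-2}. Let $\alpha_n$ be the map sending each meadow $\cA$ to the restriction to $\signimd$ of its unique expansion satisfying $\eqnsmd \union \set{x\niminv = x\minv + \ul{n} \mmul (1 - x \mmul x\minv)}$; by Theorem~\ref{theorem-defeqv-Md-NiMdn}, $\alpha_n$ is a bijection onto the class of $n$-based non-involutive meadows, with inverse induced by the defining equation of ${}\minv$. To see that $\alpha_n$ restricts to a bijection between cancellation meadows and $n$-based non-involutive cancellation meadows, I would first record the $n$-based form of Lemma~\ref{lemma-defeqns-1}, namely $x \mmul x\niminv = x \mmul x\minv$: expanding the right-hand side of the defining equation gives $x \mmul x\niminv = x \mmul x\minv + \ul{n} \mmul (x - x \mmul (x \mmul x\minv))$, and \emph{Ril} turns the second summand into $\ul{n} \mmul 0$. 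Since the invertibility witness $x \mmul x\niminv$ thus agrees with $x \mmul x\minv$, the cancellation axiom transfers along $\alpha_n$ and its inverse. The analogue of Lemma~\ref{lemma-CMd-NiCMd-2}, relating $\eqnsnimdn{n} \vdash \phi$ to $\eqnsmd \vdash \epsilon_n(\phi)$ and $\alpha_n(\cA) \models \phi$ to $\cA \models \epsilon_n(\phi)$ through the translation $\epsilon_n$ induced by the defining equation of ${}\niminv$, then follows from Theorem~\ref{theorem-defeqv-Md-NiMdn} exactly as in the one-based case.

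With these analogues available, the first reduction is the verbatim chain of Theorem~\ref{theorem-NiMd1-F1}: an equation $\phi$ over $\signimd$ holds in all $n$-based non-involutive cancellation meadows iff $\epsilon_n(\phi)$ holds in all cancellation meadows (analogues of Lemmas~\ref{lemma-CMd-NiCMd-1}.2 and~\ref{lemma-CMd-NiCMd-2}.2), iff (by Theorem~\ref{theorem-Md-F0} and Corollary~\ref{corollary-cMd-F0}) $\epsilon_n(\phi)$ is derivable from $\eqnsmd$, iff (by the analogue of Lemma~\ref{lemma-CMd-NiCMd-2}.1) $\phi$ is derivable from $\eqnsnimdn{n}$; so the two equational theories coincide. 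For the second reduction I would invoke the $n$-based analogue of Corollary~\ref{corollary-cNiMd1-F1}, obtained from Proposition~\ref{prop-origin-eqnsniinv} together with $0\niminv = \ul{n}$ in the way Corollary~\ref{corollary-cNiMd1-F1} is obtained from Proposition~\ref{prop-origin-eqnsniinvi}: up to naming of the multiplicative inverse operation, the non-trivial $n$-based non-involutive cancellation meadows are exactly the $n$-totalized fields, so their equational theories agree. Chaining the two reductions gives the theorem.

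The step I expect to be the main obstacle is, as Theorem~\ref{theorem-defeqv-Md-NiMdn} itself flags, the behaviour at $x = 0$, where the constant $\ul{n}$ now plays the role occupied by $1$ in the one-based case. Concretely, one must push $0\niminv = \ul{n}$ and the nested term $(0\niminv)\niminv$ through equations~(5.1)--(5.4)---in particular through the reflection-style equation~(5.1), which is built around $0\niminv$ rather than around a literal $1$---and confirm that the resulting $x = 0$ instances are exactly what is needed for $\alpha_n$ and $\epsilon_n$ to behave as in Section~\ref{sect-NiMd1}. Once these finitely many $x = 0$ computations are discharged, every remaining step is insensitive to the value of $n$ and reproduces the one-based proof unchanged.
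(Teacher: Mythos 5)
Your proposal is correct and follows exactly the route the paper takes: its proof of Theorem~\ref{theorem-NiMdn-Fn} simply states that it ``goes as for Theorem~\ref{theorem-NiMd1-F1}'', i.e.\ the same two-reduction argument via the $n$-based analogues of Lemmas~\ref{lemma-CMd-NiCMd-1} and~\ref{lemma-CMd-NiCMd-2} (built on Theorem~\ref{theorem-defeqv-Md-NiMdn}) and the $n$-based analogue of Corollary~\ref{corollary-cNiMd1-F1} obtained from Proposition~\ref{prop-origin-eqnsniinv}. Your explicit $x\mmul x\niminv = x\mmul x\minv$ computation and your attention to the $x=0$ cases just spell out what the paper leaves implicit.
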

\begin{proof}
The proof goes as for Theorem~\ref{theorem-NiMd1-F1}.
\qed
\end{proof}
Theorem~\ref{theorem-NiMdn-Fn} can be read as follows: $\eqnsnimdn{n}$ 
is a finite basis for the equational theory of $n$-based non-involutive 
cancellation meadows.

As a consequence of Theorem~\ref{theorem-NiMdn-Fn}, the separation axiom 
and the cancellation axiom may be used to show that an equation is 
derivable from the equations $\eqnsnimdn{n}$.

\begin{proposition}
\label{prop-NiMd-derivable}
For each $n > 0$, the equations
\begin{ldispl}
0\niminv = \ul{n}\;, 
\quad\;\;
1\niminv = 1\;, 
\quad\;\;
(- x)\niminv =
- (x\niminv) \mmul (x \mmul x\niminv) + 
\ul{n} \mmul (1 - x \mmul x\niminv)\;, 
\\ 
(x \mmul y)\niminv =
x\niminv \mmul y\niminv \mmul 
((x \mmul x\niminv) \mmul (y \mmul y\niminv)) +
\ul{n} \mmul 
(1 - (x \mmul x\niminv) \mmul (y \mmul y\niminv))
\end{ldispl}%
are derivable from the equations $\eqnsnimdn{n}$.
\end{proposition}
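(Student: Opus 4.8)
The plan is to follow the proof of Proposition~\ref{prop-NiMd1-derivable} and generalise it from the one-based case to arbitrary $n > 0$, the essential change being that the constant $1$ is replaced by $\ul{n}$ and that explicit correction terms $\ul{n} \mmul (1 - \ldots)$ appear. First I would dispose of the two easy equations. The equation $0\niminv = \ul{n}$ is immediate, since it is literally a member of $\eqnsnimdn{n}$; here, in contrast with the one-based case, no appeal to a preliminary proposition is needed. The equation $1\niminv = 1$ follows by instantiating (5.2) with $x := 1$ and simplifying $1 \mmul (1 \mmul 1\niminv)$ to $1\niminv$ using the commutative-ring axioms.

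For the two remaining equations I would invoke Theorem~\ref{theorem-NiMdn-Fn}, which permits the separation axiom and the cancellation axiom to be used when showing derivability from $\eqnsnimdn{n}$. As in the one-based proof, the key tool is the variant $\mathit{Gil}' : x \neq 0 \Implies x \mmul x\niminv = 1$ of the general inverse law, derivable from (5.2) and the cancellation axiom, and I would argue by case distinction on whether the arguments are zero. For $(- x)\niminv$: if $x \neq 0$, then both $(-x) \mmul ((-x) \mmul (-x)\niminv) = -x$ and $(-x) \mmul ((-x) \mmul -(x\niminv)) = -x$ hold by (5.2) (the latter because $(-x) \mmul -(x\niminv) = x \mmul x\niminv$), so $(- x)\niminv = -(x\niminv)$ by the cancellation axiom, and $\mathit{Gil}'$ turns this into the stated right-hand side, since the correction term $\ul{n} \mmul (1 - x \mmul x\niminv)$ vanishes when $x \mmul x\niminv = 1$; if $x = 0$, the equation collapses, using $0 \mmul 0\niminv = 0$, to the axiom $0\niminv = \ul{n}$. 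For $(x \mmul y)\niminv$: if $x \neq 0$ and $y \neq 0$, then (5.2) applied separately to $x$ and to $y$ gives both $(x \mmul y) \mmul ((x \mmul y) \mmul (x \mmul y)\niminv) = x \mmul y$ and $(x \mmul y) \mmul ((x \mmul y) \mmul (x\niminv \mmul y\niminv)) = x \mmul y$, so the cancellation axiom yields $(x \mmul y)\niminv = x\niminv \mmul y\niminv$, and $\mathit{Gil}'$ again reduces the right-hand side to this; if $x = 0$ or $y = 0$, the equation collapses to $0\niminv = \ul{n}$.

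The correction terms $\ul{n} \mmul (1 - \ldots)$ are designed precisely so that they vanish when the relevant argument is non-zero, since then the factor $x \mmul x\niminv$ (respectively $(x \mmul x\niminv) \mmul (y \mmul y\niminv)$) equals $1$, and supply the value $\ul{n}$ when the argument is zero, since then that factor equals $0$; thus the equations encode both cases uniformly. I expect the only non-routine part to be the bookkeeping in the zero cases: one must check that, once the appropriate factor becomes $0$, each side genuinely reduces to $\ul{n}$, so that the whole equation reduces to the axiom $0\niminv = \ul{n}$. This is where the generalisation from $n = 1$ differs superficially from Proposition~\ref{prop-NiMd1-derivable}, but it presents no real difficulty beyond replacing $1$ by $\ul{n}$ throughout.
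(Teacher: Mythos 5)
Your proposal is correct and follows essentially the same route as the paper: $0\niminv = \ul{n}$ is taken directly from $\eqnsnimdn{n}$, $1\niminv = 1$ comes from (5.2), and the last two equations are obtained exactly as in Proposition~\ref{prop-NiMd1-derivable}, via $\mathit{Gil}'$, the cancellation axiom, and a case split on zero versus non-zero arguments (which Theorem~\ref{theorem-NiMdn-Fn} licenses), with the correction terms $\ul{n} \mmul (1 - \ldots)$ handling the zero cases. The paper's proof merely cites the one-based argument for the last two equations; you have spelled out the same adaptation it intends.
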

\begin{proof}
The equation $0\niminv = \ul{n}$ belongs to $\eqnsnimdn{n}$.
We have $1\niminv = 1$ by~(5.2).
The proof for the last two equations goes as for 
Proposition~\ref{prop-NiMd1-derivable}.
\qed
\end{proof}

The $n$-based non-involutive cancellation meadow that we are most 
interested in is $\Ratn{n}$, the $n$-totalized field of rational 
numbers.
$\Ratn{n}$ differs from the field of rational numbers only in that the
multiplicative inverse of zero is $n$.%
\begin{theorem}
\label{theorem-Ratn}
$\Ratn{n}$ is the initial algebra among the total algebras over the 
signature $\signimd$ that satisfy the equations
\begin{ldispl}
\eqnsnimdn{n} \union 
\set{(1 + x^2 + y^2) \mmul (1 + x^2 + y^2)\niminv = 1}\;.
\end{ldispl}%
\end{theorem}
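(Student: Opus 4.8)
The plan is to transfer the initiality of $\Ratz$ (Theorem~\ref{theorem-Ratz}) across the definitional equivalence of Theorem~\ref{theorem-defeqv-Md-NiMdn}, exactly as the proof of Theorem~\ref{theorem-Rato} transfers it across Theorem~\ref{theorem-defeqv-Md-NiMd1}. First I would record the $n$-based analogue of Lemma~\ref{lemma-defeqns-1}: from $\eqnsmd$ together with the defining equation $x\niminv = x\minv + \ul{n} \mmul (1 - x \mmul x\minv)$ one derives $x \mmul x\niminv = x \mmul x\minv$, since distribution produces the extra summand $\ul{n} \mmul (x - x \mmul (x \mmul x\minv))$, which vanishes by \emph{Ril}~(2.2). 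The coefficient $\ul{n}$ plays no role in this step, so the derivation is literally the one for $n = 1$.

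Next I would set up the $n$-based analogues of Lemmas~\ref{lemma-CMd-NiCMd-1} and~\ref{lemma-CMd-NiCMd-2}. Let $\alpha$ map each meadow $\cA$ to the restriction to $\signimd$ of its unique expansion satisfying $\eqnsmd \union \set{x\niminv = x\minv + \ul{n} \mmul (1 - x \mmul x\minv)}$, with inverse $\alpha'$ given by the defining equation of ${}\minv$; by Theorem~\ref{theorem-defeqv-Md-NiMdn} these are mutually inverse bijections between the class of all meadows and the class of all $n$-based non-involutive meadows. Because each algebra and its image share the same carrier and each operation of one signature is a term over the other, a function is a homomorphism for $\sigmd$ iff it is a homomorphism for $\signimd$; hence $\alpha$ is in fact an isomorphism of the two categories of models, and therefore carries initial objects to initial objects. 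Writing $\epsilon$ for the translation of equations induced by the defining equation of ${}\niminv$, the analogue of Lemma~\ref{lemma-CMd-NiCMd-2} gives $\alpha(\cA) \models \phi$ iff $\cA \models \epsilon(\phi)$.

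The one genuinely $n$-specific step is checking that the two constraint equations correspond under $\epsilon$. Abbreviating $s = 1 + x^2 + y^2$ and applying $\epsilon$ to $s \mmul s\niminv = 1$, distribution yields $s \mmul s\minv + \ul{n} \mmul (s - s \mmul (s \mmul s\minv)) = 1$, and the second summand again vanishes by \emph{Ril}~(2.2), leaving $s \mmul s\minv = 1$. Thus, modulo $\eqnsmd$, $\epsilon$ maps the non-involutive constraint equation to the meadow constraint equation of Theorem~\ref{theorem-Ratz}; equivalently, by the analogue of Lemma~\ref{lemma-defeqns-1}, $s \mmul s\niminv = s \mmul s\minv$ holds already in every expanded model. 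Consequently $\alpha$ restricts to an isomorphism between the category of $\sigmd$-models of $\eqnsmd \union \set{s \mmul s\minv = 1}$ and the category of $\signimd$-models of $\eqnsnimdn{n} \union \set{s \mmul s\niminv = 1}$.

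It then remains to identify $\alpha(\Ratz)$ with $\Ratn{n}$. On the carrier of rational numbers the defining equation of ${}\niminv$ gives, for $x \neq 0$, the usual inverse, while for $x = 0$ it gives $0\niminv = 0 + \ul{n} \mmul (1 - 0) = \ul{n}$, so the expansion of $\Ratz$ is exactly $\Ratn{n}$. Since $\Ratz$ is initial in the $\sigmd$-class by Theorem~\ref{theorem-Ratz} and $\alpha$ is an isomorphism of the corresponding subcategories sending $\Ratz$ to $\Ratn{n}$, the algebra $\Ratn{n}$ is initial in the $\signimd$-class, which is the claim. I expect the main obstacle to be not any single computation but the bookkeeping needed to make the $n$-based analogues of Lemmas~\ref{lemma-CMd-NiCMd-1} and~\ref{lemma-CMd-NiCMd-2} precise enough that initiality — a property quantifying over all homomorphisms, not merely over satisfied equations — genuinely transfers, rather than only the equational theory transferring.
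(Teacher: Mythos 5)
Your proof is correct, but it takes a genuinely different route from the paper. The paper's proof of Theorem~\ref{theorem-Ratn} (like that of Theorem~\ref{theorem-Rato}) simply states that the proof goes as for Theorem~9 of the earlier paper on $\Ratz$, i.e.\ the authors redo the original initial-algebra argument directly for the operator ${}\niminv$, rather than deducing the result from Theorem~\ref{theorem-Ratz}. You instead transfer initiality across the definitional equivalence of Theorem~\ref{theorem-defeqv-Md-NiMdn}: the expansion-and-restriction map $\alpha$ is identity on carriers, every operation of either signature is a term over the other, so $\sigmd$-homomorphisms and $\signimd$-homomorphisms between corresponding algebras coincide as functions, making $\alpha$ a concrete isomorphism of the two categories of models; your \emph{Ril}-based computations correctly show that the constraint $(1+x^2+y^2)\mmul(1+x^2+y^2)\niminv = 1$ translates, modulo $\eqnsmd$, to the constraint of Theorem~\ref{theorem-Ratz}, and that $\alpha(\Ratz)=\Ratn{n}$. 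What your approach buys is that Theorem~\ref{theorem-Ratz} is used as a black box, so the number-theoretic content behind the term $1+x^2+y^2$ never has to be revisited and the only $n$-dependent step is one small calculation; the cost is exactly the bookkeeping you flag at the end, namely upgrading Lemmas~\ref{lemma-CMd-NiCMd-1} and~\ref{lemma-CMd-NiCMd-2} (which speak only of satisfaction of equations) to a statement about homomorphisms, which the paper never makes explicit but which is standard for definitional equivalence. The paper's approach avoids that categorical detour at the price of repeating the original construction.
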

\begin{proof}
The proof goes as for Theorem~9 from~\cite{BM09g}.
\qed
\end{proof}

The following is an outstanding question with regard to non-involutive 
meadows: are the equational theory of non-involutive meadows and the 
equational theory of non-zero-totalized fields the same up to naming of 
the multiplicative inverse operation?

\section{Concluding Remarks}
\label{sect-concl}

We have worked out the details of one-based non-involutive meadows and 
non-involutive meadows.
We have given finite equational specifications of the class of all
one-based non-involutive meadows and the class of all non-involutive 
\linebreak[2] meadows.
We have presented results concerning the connections of one-based 
non-involutive meadows with meadows, one-totalized fields in general, 
and the one-totalized field of rational numbers and also generalizations 
of these results to $n$-based non-involutive meadows.

One-based non-involutive meadows and non-involutive meadows require more 
axioms than (zero-based/involutive) meadows.
We believe that the axioms of meadows are more easily memorized than the 
axioms of one-based non-involutive meadows and the axioms of 
non-involutive meadows.
Despite the differences, the axiomatizations of (zero-based/involutive) 
meadows and $n$-based non-involutive meadows ($n > 0$) are essentially 
the same (i.e.\ they are definitionally equivalent).
Moreover, the connections of $n$-based non-involutive meadows ($n > 0$) 
with $n$-totalized fields in general and the $n$-totalized field of 
rational numbers are essentially the same as the connections of 
(zero-based/involutive) meadows with zero-totalized fields in general 
and the zero-totalized field of rational numbers.

The equational specification of the class of all non-involutive meadows 
allows of a uniform treatment of $n$-based non-involutive meadows for 
all $n > 0$.
It is an open question whether there exists a finite equational 
specification of the class of all involutive and non-involutive meadows.

\bibliographystyle{splncs03}
\bibliography{MD}

\end{document}